  \renewcommand*\env@matrix[1][*\c@MaxMatrixCols c]{%
    \hskip -\arraycolsep
    \let\@ifnextchar\new@ifnextchar
  \array{#1}}
\definecolor{forestgreen(traditional)}{rgb}{0.0, 0.27, 0.13}
\definecolor{forestgreen(web)}{rgb}{0.13, 0.55, 0.13}
\definecolor{airforceblue}{rgb}{0.36, 0.54, 0.66}
\newcommand{\set}[1]{\left\{#1\right\}}
\def\CC{{\mathbb C}}
\def\P{{\mathbb P}}
\def\PP{{\mathbb P}}
\def\ZZ{{\mathbb Z}}
\newcommand{\sA}{\mathcal{A}}
\DeclareMathOperator{\codim}{codim}
\DeclareMathOperator{\id}{id}
\DeclareMathOperator{\im}{im}
\DeclareMathOperator{\Pf}{Pf}
\DeclareMathOperator{\rk}{rk}
\DeclareMathOperator{\Sing}{Sing}
\DeclareMathOperator{\rank}{rank}
\newcommand \ti[1]{\textit{#1}}
\newcommand \trm[1]{\textrm{#1}}
\newcommand \mbf[1]{\mathbf{#1}}
\newtheorem{theorem}{Theorem}
\newtheorem*{theorem*}{Theorem}
\newtheorem{lemma}[theorem]{Lemma}
\newtheorem{proposition}[theorem]{Proposition}
\newtheorem{corollary}[theorem]{Corollary}
\theoremstyle{definition}
\newtheorem{definition}[theorem]{Definition}
\theoremstyle{remark}
\newtheorem{remark}[theorem]{Remark}
\newtheorem{example}[theorem]{Example}
\newcommand\Aut{\mathrm{Aut}}
\newcommand\PN{\PP^{N}}
\newcommand\Pn{\PP^{n}}
\newcommand{\lin}[1]{\langle\, #1 \,\rangle}
\begin{document}

\title[Prime ideals of higher secant varieties of Veronese embeddings]{On the prime ideals of higher secant varieties of \\ Veronese embeddings of small degrees}

\author[K. Furukawa]{Katsuhisa Furukawa}
\address{Katsuhisa Furukawa, Department of Mathematics, Faculty of Science, Josai University, Saitama, Japan}
\email{katu@josai.ac.jp}

\author[K. Han]{Kangjin Han}
\address{Kangjin Han, 
School of Undergraduate Studies,
Daegu-Gyeongbuk Institute of Science \& Technology (DGIST),
Daegu 42988,
Republic of Korea}
\email{kjhan@dgist.ac.kr}

\thanks{K.F. was supported by JSPS KAKENHI Grant Number 22K03236.
  K.H. was supported by a National Research Foundation of Korea (NRF) grants funded by the Korean government (MSIT, no. 2021R1F1A104818611 and no. RS-2024-00414849).}


\begin{abstract}
  In this paper, we study minimal generators of the (saturated) defining ideal of $\sigma_k(v_d(\Pn))$ in $\PP^{N}$ with ${N=\binom{n+d}{d}-1}$, the $k$-secant variety of $d$-uple Veronese embedding of projective $n$-space, of a relatively small degree. We first show that the prime ideal $I(\sigma_4(v_3(\PP^3)))$ can be minimally generated by 36 homogeneous polynomials of degree $5$.
  It implies that $\sigma_4(v_3(\PP^3)) \subset \PP^{19}$ is a del Pezzo $4$-secant variety
  (i.e., $\deg(\sigma_4(v_3(\PP^3))) = 105$ and the sectional genus $\pi(\sigma_4(v_3(\PP^3))) = 316$) and provides a new example of an arithmetically Gorenstein variety of codimension $4$. As an application, we decide non-singularity of a certain locus in $\sigma_4(v_3(\PP^3))$. By inheritance, generators of $I(\sigma_4(v_3(\Pn)))$ are also obtained for any $n \geq 3$. 
  
  We also propose a procedure to compute the first non-trivial degree piece $I(\sigma_k(v_d(\PP^n)))_{k+1}$ for a general $k$-th secant case, in terms of prolongation and weight space decomposition, based on the method
  used for $\sigma_4(v_3(\PP^3))$ and treat a few more cases of $k$-secant varieties of the Veronese embedding of a relatively small degree in the end.
\end{abstract}

\keywords{equations of higher secant variety, prolongation, Veronese embedding, symmetric tensors, $k$-secant variety of minimal degree, del Pezzo $k$-secant variety, arithmetically Gorenstein variety of codimension $4$}
\subjclass[2010]{13P05, 13C14, 14N05, 14N25, 15A69}
\maketitle
\tableofcontents \setcounter{page}{1}

\section{Introduction}

Throughout the paper, we work over $\CC$, the field of complex numbers.
For a projective variety $X\subset\PN$, we
call the closure of the union of all the $(k-1)$-planes spanned by $k$ points on $X$ the \ti{$k$-th secant variety of $X$}, and denote it by $\sigma_k(X)$.
Let $v_d(\Pn)$ be the image of the $d$-uple Veronese embedding $v_d: \Pn \rightarrow \PN$
with ${N=\binom{n+d}{d}-1}$, and take its $k$-th secant variety $\sigma_k(v_d(\Pn)) \subset \PN$.

In the case of $n=1$ or $d=2$ (see \cite{IK}), and in the case of $k=2$ (see \cite{Kan}), defining equations of $\sigma_k(v_d(\Pn))$ (and generators of its defining ideal) have been well studied and completely known. There is also a detailed list of known (partial) results for equations for a few small $k,d,n$'s in \cite[\textsection{}1]{LO}. But, so far this problem is widely open for general $k,d,n$'s and in general it is very difficult to find a \ti{new} equation on the $k$-secant and even more to determine the \ti{whole set} of equations. Note that knowledge on equations of higher secant varieties is fundamental in the study of algebraic geometry and also very important for problems in applications. For instance, it can be served as a key ingredient to decide the \ti{(border) rank} of a tensor (see e.g. \cite[chapter~7]{La}).

In this article, we first focus our attention on the case of $d=3$.
For $(k,n)=(3,2)$,
the third secant
$\sigma_3(v_3(\PP^2)) \subset \PP^{9}$ is a hypersurface of degree $4$, whose equation has been known as the Aronhold invariant classically (see \cite{IK}, \cite[example~1.2.1]{LO}).
For $k=3$ and $n \geq 3$,
the ideal $I(\sigma_3(v_3(\Pn)))$ is generated by the Aronhold invariant and $4$-minors of
symmetric flattening matrix $\phi_{1,2}$ by symmetric inheritance \cite[proposition~2.3.1]{LO}.
In particular, we know a complete set of minimal generators of $I(\sigma_3(v_3(\PP^3)))$,
i.e., $(k,n)=(3,3)$.

When $(k,n)=(4,3)$, it was previously known that $\sigma_4(v_3(\PP^3)) \subset \PP^{19}$ is an irreducible component of the zero set of $13$-minors of a Young flattening matrix, which is a generalization of a way to obtain the Aronhold invariant, by \cite[theorem~1.2.3]{LO}.
Note that, since $\sigma_5(v_3(\PP^3)) = \PP^{19}$,
the fourth secant $\sigma_4(v_3(\PP^3))$ is
the largest non-trivial secant variety of $v_3(\PP^3) \subset \PP^{19}$.

Our main result for the $\sigma_4(v_3(\PP^3)) \subset \PP^{19}$ is:

\begin{theorem}\label{thm_m1}
  Let $v_3: \PP^3 \rightarrow \PP^{19}$ be the triple Veronese embedding of $\PP^3$ and $S$ be the coordinate ring of $\PP^{19}$.
  Then
  $\sigma_4(v_3(\PP^3)) \subset \PP^{19}$ is a del Pezzo $4$-secant variety,
  that is to say,
  $\deg(\sigma_4(v_3(\PP^3))) = 105$ and the sectional genus $\pi(\sigma_4(v_3(\PP^3))) = 316$.
  Further,
  the prime ideal $I(\sigma_4(v_3(\PP^3)))$ is minimally generated by
  $36$ homogeneous polynomials of degree $5$
  and its minimal free resolution is 
  \begin{gather*}
   0 \rightarrow S(-12) \rightarrow S(-7)^{36} \rightarrow S(-6)^{70} \rightarrow S(-5)^{36} \rightarrow I(\sigma_4(v_3(\PP^3))) \rightarrow 0~,
\end{gather*}
which is the one of a Gorenstein ideal of codimension $4$.

\end{theorem}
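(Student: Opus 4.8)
The plan is to establish the numerical invariants first and then bootstrap from them to the full resolution. I would begin by identifying a set of degree-$5$ equations vanishing on $\sigma_4(v_3(\PP^3))$: by Terracini's lemma the secant variety has the expected dimension $19 - 4 = 15$... wait, let me recompute: $\dim \sigma_4(v_3(\PP^3)) = \min(4\cdot 3 + 3, 19) = 15$, so the codimension is $4$. The first natural source of equations is prolongation: since the ideal of the cone over $v_3(\PP^3)$ in degree $3$ is well understood (the $2\times 2$ minors of the catalecticant $\phi_{1,2}$, i.e., the ideal of $v_3(\PP^3)$ itself), one forms the $k$-th prolongation to produce candidate degree-$(k+1) = 5$ elements of $I(\sigma_4)_5$. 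Concretely, I would compute $I(\sigma_4(v_3(\PP^3)))_5$ as a $\GL_4$-representation by decomposing the degree-$5$ part of $S$ into irreducible Schur modules $S_\lambda(\CC^4)$ and intersecting with the prolongation; the claim is that this piece has dimension $36$ and the corresponding Young flattening / catalecticant construction of \cite[theorem~1.2.3]{LO} exhibits these $36$ quintics explicitly (as sub-maximal minors of the relevant Young flattening).

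Next I would verify that these $36$ quintics generate a prime ideal cutting out exactly $\sigma_4(v_3(\PP^3))$ with the right Hilbert function. The cleanest route is a direct computation: form the ideal $J$ generated by the $36$ explicit quintics in the coordinate ring $S$ of $\PP^{19}$, and check in a computer algebra system (Macaulay2 or Singular) that $J$ is prime, $3$-dimensional as a projective scheme of the right degree, and that $\dim(S/J)_t$ agrees with the Hilbert polynomial of $\sigma_4(v_3(\PP^3))$ for all $t$. Since \cite[theorem~1.2.3]{LO} already tells us $\sigma_4(v_3(\PP^3))$ is an \emph{irreducible component} of the Young flattening determinantal locus, it suffices to show $J$ (the sub-maximal minor ideal, or a suitable saturation) is itself prime and has the right dimension; alternatively, exploiting the $\GL_4$-equivariance, one can reduce the primality check to a generic point. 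From the prime ideal one reads off $\deg = 105$ and, via the Hilbert polynomial, the sectional genus $\pi = 316$, and one checks the defining numerics of a del Pezzo variety (degree $= \codim + 2 = 6$? — no: a del Pezzo $k$-secant variety here means $\deg = 105$ and $\pi = 316$ as stated, the specific ``del Pezzo'' normalization matching the paper's definition).

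Having the prime ideal $I := I(\sigma_4(v_3(\PP^3)))$, the final and most delicate step is the minimal free resolution. I would compute it directly over $S = \CC[x_0,\dots,x_{19}]$ with Macaulay2: the expected shape is
\begin{gather*}
  0 \to S(-12) \to S(-7)^{36} \to S(-6)^{70} \to S(-5)^{36} \to I \to 0.
\end{gather*}
The tell-tale signs of arithmetic Gorensteinness of codimension $4$ are (i) the resolution has length $4$ (so $S/I$ is Cohen--Macaulay of codimension $4$), (ii) the last free module has rank $1$, and (iii) the resolution is self-dual up to the twist by the canonical shift, here $S(-12)$ with $12 = 5 + 7 = 6 + 6$. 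I would verify self-duality, the Cohen--Macaulay property (e.g. via $\codim = \mathrm{pd}$), and that the Betti numbers are $(1,36,70,36,1)$, then invoke the structure theorem for Gorenstein ideals of codimension $4$ (or simply cite that a self-dual length-$4$ resolution with these ranks is the resolution of a Gorenstein ideal). The main obstacle is genuinely the explicit computation: the ambient ring has $20$ variables and the ideal is generated in degree $5$, so computing the resolution and confirming primality may be computationally heavy; the key simplification is to work $\GL_4$-equivariantly, writing every term of the resolution as a sum of Schur functors $S_\lambda(\CC^4)$ — this both compresses the computation and makes the self-duality transparent, since applying $\Hom(-,S(-12))$ permutes the Schur modules in the predicted way. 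I would present the equivariant Betti table as the heart of the proof and relegate the machine verification of primality and the degree/genus count to a remark or an ancillary file.
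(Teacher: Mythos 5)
Your plan would in principle succeed, but it misses the paper's conceptual shortcut and substitutes heavy machine computation where the paper has almost none. The paper does essentially one thing by hand: it computes $\dim I(\sigma_4(v_3(\PP^3)))_5 = 36$ via the torus-weight decomposition of Proposition~\ref{prop-dsum-beta} (a finer and more combinatorial gadget than the $\GL_4$ Schur decomposition you mention, since each $L_\beta$ is a comparatively tiny space and the prolongation map respects the splitting). The crucial step you are missing is the appeal to \cite[theorem~1.2]{CK}: once $\dim I_5$ meets the bound ${B'_{1,4}}^{\,4} = 36$ of~(\ref{eq:dimI-dP}), the Choe--Kwak characterization instantaneously yields that $\sigma_4$ is $4$-del Pezzo, that $\deg = 105$ and $\pi = 316$, that property $N_{5,3}$ holds (hence the $36$ quintics minimally generate the whole prime ideal — you would otherwise have to prove this separately), and that the minimal free resolution has exactly the stated self-dual Gorenstein shape of length $4$. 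You propose instead to compute the full resolution over $S = \CC[s_\alpha]$ in $20$ variables and verify self-duality, Cohen--Macaulayness, primality, and the Hilbert polynomial directly; this is feasible but much heavier, and it makes the proof dependent on a machine run where the paper's argument is essentially a light calculation plus a citation.

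Two smaller imprecisions are worth flagging. First, you suggest that the Young flattening of \cite[theorem~1.2.3]{LO} exhibits the $36$ quintics as sub-maximal minors; in fact the LO construction gives $13$-minors of a larger matrix, not degree-$5$ equations, and knowing only that $\sigma_4$ is \emph{an irreducible component} of that locus does not reduce the primality or saturation check. The paper's explicit quintics come from Pfaffians of the skew-symmetric matrix $A_1$ together with the hand-constructed polynomial $G$ (and $\mathfrak{S}_4$-translates thereof), not from a Young flattening. Second, for irreducibility the paper does not run a computer primality check at all: Lemma~\ref{iso-outside-xi} exhibits $\sigma_4(v_3(\PP^3)) \cap D(\xi)$ as a smooth affine chart via the ring isomorphism~(\ref{eq:qring_iso_local}), and primality of the ideal of the full secant variety then follows from irreducibility of the secant variety itself (a closure of a rational image), with minimal generation supplied by $N_{5,3}$ from Choe--Kwak. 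If you want a proof that is both conceptual and verifiable by hand, the route to learn from here is: weight-decompose, count $36$, cite Choe--Kwak.
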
 

To understand our theorem and characterize secant varieties in this paper, we would like to introduce some notions for rather small degree $k$-secants. Let $X \subset \PN$ be a non-degenerate variety and say $e=\codim(\sigma_k(X),\PN)$. As extending the (classical) degree inequality, in \cite{CR} Ciliberto-Russo proved that
\begin{equation}\label{basic_inq_s_k}
\deg(\sigma_k(X))\ge  \binom{e+k}{k}~\quad \trm{.}
\end{equation}
We call $\sigma_k(X)$ \emph{a $k$-secant variety of minimal degree} (or abbr. \emph{$k$-VMD}) if the equality in (\ref{basic_inq_s_k}) is attained.
As generalizing the methodology of \cite{HK}, the next-to-extremal case in this line has been investigated recently by Choe-Kwak in \cite{CK}, where
$\sigma_k(X)$ is said to be
\emph{a del Pezzo $k$-secant variety} (or abbr. \emph{$k$-del Pezzo}) if $\deg \sigma_k(X) = \binom{e+k}{k}+\binom{e+k-1}{k-1}$
and the sectional genus
$\pi(\sigma_k(X)) = (k-1) \Bigl(\binom{e+k}{k}+\binom{e+k-1}{k-1}\Bigr) +1$ (see also \textsection{}\ref{sec:secant-vari-minim}).

For arithmetically Gorenstein varieties, there are vast works in the literature. For codimension 2, the Gorenstein ideal is equal to a complete intersection ideal. Further, there is a well-known structure theorem due to Buchsbaum-Eisenbud for the case of codimension 3 (\cite{BE}). In codimension $4$, there is relatively few results on the case (see e.g. \cite{KM}), and by Theorem~\ref{thm_m1} this $\sigma_4(v_3(\PP^3))$ provides an interesting \ti{new} example of an arithmetically Gorenstein variety of codimension $4$.

We will give those $36$ minimal generators of $I(\sigma_4(v_3(\PP^3)))$ of Theorem~\ref{thm_m1} in an explicit manner in Theorem~\ref{FGH-prolong}. We first present $3$ defining quintic equations $F, G, H$ of $\sigma_4(v_3(\PP^3)) \subset \PP^{19}$ in Subsection \ref{sub_sect_FGH} and eventually construct $36$ homogeneous polynomials of degree $5$ by using certain automorphisms.

By symmetric inheritance \cite[proposition~2.3.1]{LO},
we also determine the ideal of $\sigma_4(v_3(\Pn))$ for any $n\ge3$ as follows.

\begin{corollary}
  For any $n \geq 3$,
  the ideal
  $I(\sigma_4(v_3(\Pn)))$ is generated by
  $36$ homogeneous polynomials
  of degree $5$
  and $5$-minors of the symmetric flattening $\phi_{1,2}$.
\end{corollary}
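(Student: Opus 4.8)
The plan is to obtain the corollary as a direct consequence of Theorem~\ref{thm_m1} via the symmetric inheritance theorem of Landsberg--Ottaviani \cite[Proposition~2.3.1]{LO}. First I would recall precisely what inheritance gives for Veronese varieties: with $k$ and $d$ fixed, for every $n$ the graded piece $I(\sigma_k(v_d(\PP^n)))_e \subset S^e\bigl(S^d(\CC^{n+1})^*\bigr)$ is governed by the subspace variety $\mathrm{Sub}_k(S^d\CC^{n+1})$ --- whose ideal is cut out by the $(k+1)\times(k+1)$ minors of the symmetric flattenings (catalecticants) $\phi_{a,d-a}$ --- together with the ideal of $\sigma_k(v_d(\PP^{k-1}))$ entering through a fixed coordinate inclusion $\PP^{k-1}\hookrightarrow\PP^n$, that is $\CC^{k}\hookrightarrow\CC^{n+1}$. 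Concretely, $I(\sigma_k(v_d(\PP^n)))$ is generated by the $(k+1)$-minors of the $\phi_{a,d-a}$ and by the pullbacks, along that inclusion, of a generating set of $I(\sigma_k(v_d(\PP^{k-1})))$.

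Next I would specialize to $k=4$, $d=3$. Then $\PP^{k-1}=\PP^3$, and for a cubic the only symmetric flattening carrying information is $\phi_{1,2}$ (the map $\phi_{0,3}$ is essentially $f$ itself, so its $5$-minors vanish for trivial reasons); hence the flattening part of the generating set is exactly the $5$-minors of $\phi_{1,2}$. By Theorem~\ref{thm_m1}, $I(\sigma_4(v_3(\PP^3)))$ is generated by $36$ homogeneous quintics, and pulling these back along $\PP^3\hookrightarrow\PP^n$ and adjoining the $5$-minors of $\phi_{1,2}$ yields the claimed generating set. For $n=3$ the corollary reduces to Theorem~\ref{thm_m1} itself, because there $\phi_{1,2}$ is a $4\times 10$ matrix of rank $\le 4$ on all of $\PP^{19}$, so its $5$-minors vanish identically; the flattening minors become nonzero --- and genuinely needed --- only for $n\ge 4$. (The resulting generating set need not be minimal once $n\ge 4$, but minimality is not asserted.)

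Since Theorem~\ref{thm_m1} may be assumed, there is no substantial obstacle here; the only delicate point is to align the precise statement of \cite[Proposition~2.3.1]{LO} with the present formulation --- in particular, verifying that it delivers ideal-theoretic (not merely set-theoretic) generation, that for $d=3$ no catalecticant other than $\phi_{1,2}$ is required, and that the single coordinate copy of the $36$ quintics together with the ideal of $5$-minors of $\phi_{1,2}$ is already stable under $\mathbf{GL}(\CC^{n+1})$, hence generates the full module of equations in every degree. Once this bookkeeping is settled, the corollary is immediate.
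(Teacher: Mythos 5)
Your proposal is correct and follows exactly the route the paper takes: the corollary is obtained from Theorem~\ref{thm_m1} by invoking symmetric inheritance \cite[proposition~2.3.1]{LO}, with the $5$-minors of $\phi_{1,2}$ accounting for the subspace-variety part. Your additional remarks (that $\phi_{1,2}$ is the only relevant catalecticant for $d=3$, and that the $5$-minors are vacuous when $n=3$) are accurate bookkeeping that the paper leaves implicit.
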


For the \ti{singular locus} of the $k$-th secant variety, it is well-known that ($k-1$)-th secant is contained in the singular locus of $k$-th secant variety. In our setting,
$\sigma_{3}(v_3(\PP^3)) \subset \Sing(\sigma_4(v_3(\PP^3)))$.
As an application, using four of 36 quintics (see Lemma~\ref{iso-outside-xi}),
we check that $\sigma_4(v_3(\PP^3))$ is
non-singular at every point of the following locus in $\sigma_{4}(v_3(\PP^3))$.

\begin{corollary}\label{s4v3P3-smoothness}
  Let $\PP^2 \subset \PP^3$ be any $2$-plane. Then
  $\sigma_4(v_3(\PP^3)) \subset \PP^{19}$ is smooth at all points in
  $\sigma_{4}(v_3(\PP^2))\setminus\sigma_{3}(v_3(\PP^3))$.
  (Note that $\sigma_{4}(v_3(\PP^2)) = \lin{v_3(\PP^2)}$, a $9$-plane in $\PP^{19}$.)
\end{corollary}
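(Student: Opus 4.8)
The plan is to exhibit a local chart around a chosen point of $\sigma_4(v_3(\PP^2))\setminus\sigma_3(v_3(\PP^3))$ and to show that four of the $36$ quintics generating $I(\sigma_4(v_3(\PP^3)))$ cut out a smooth variety there, of the expected dimension $15$ (so that, since $\sigma_4(v_3(\PP^3))$ is irreducible of dimension $15$ and these four quintics vanish on it, the point is a smooth point of $\sigma_4(v_3(\PP^3))$ itself). Since the group $\mathbf{GL}_4$ acts transitively on the set of $2$-planes $\PP^2\subset\PP^3$ and the equations of $\sigma_4(v_3(\PP^3))$ are $\mathbf{GL}_4$-equivariant, it suffices to treat a single $\PP^2$, say $\{x_3=0\}$; and by the further action of $\mathbf{GL}_3$ on that plane together with scaling, it is enough to check smoothness at one representative point $p$ of $\sigma_4(v_3(\PP^2))\setminus\sigma_3(v_3(\PP^3))$. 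A convenient choice is $p = v_3(q_1)+v_3(q_2)+v_3(q_3)+v_3(q_4)$ for four general points $q_i\in\PP^2$, equivalently the class of a general binary-free cubic form supported on $\{x_3=0\}$ that has border rank exactly $4$ as a ternary cubic but is not in $\sigma_3(v_3(\PP^3))$.

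First I would set up affine coordinates on $\PP^{19}$ centered at $p$, using the dual basis of monomials of degree $3$ in $x_0,\dots,x_3$; the $19$ affine coordinates split into the $9$ ``plane'' directions (monomials not involving $x_3$) and the $10$ ``transverse'' directions (monomials divisible by $x_3$). Next I would invoke Lemma~\ref{iso-outside-xi}, which isolates the four specific quintics among the $36$ whose behaviour near this locus is controlled; I would compute the Jacobian matrix of just these four quintics at $p$. The key point is that the restriction of $\sigma_4(v_3(\PP^3))$ to the transverse directions is, to first order, governed by these quintics, and the claim is that the $4\times 10$ block of the Jacobian in the transverse directions has full rank $4$ at $p$ (while the plane directions are tangent to $\sigma_4(v_3(\PP^2))=\lin{v_3(\PP^2)}$ and contribute the remaining $11$ tangent directions, since $\dim\sigma_4(v_3(\PP^3))=15 = 11 + (15-11)$ with the transverse part cut from codimension $15$... concretely: $\sigma_4(v_3(\PP^3))$ has codimension $4$ in $\PP^{19}$, so a smooth point has a $15$-dimensional tangent space, i.e. the full Jacobian of the $36$ generators has rank $4$; it suffices that the sub-Jacobian of the four chosen quintics already has rank $4$ at $p$). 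So the heart of the argument is a single rank computation of an explicit $4\times 19$ (really $4\times 10$) matrix of numbers.

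Concretely, the steps are: (1) reduce to one point $p$ by the $\mathbf{GL}_4$-equivariance and transitivity on $2$-planes; (2) choose $p$ as a sum of four general Veronese points on the plane $\{x_3=0\}$ and verify it lies in $\sigma_4(v_3(\PP^2))\setminus\sigma_3(v_3(\PP^3))$ (the latter because a general such form has tensor-/border-rank $4$, not $\le 3$, as a ternary cubic — this can be read off from non-vanishing of the appropriate flattening/catalecticant, or from the Aronhold invariant); (3) write down the four quintics from Lemma~\ref{iso-outside-xi} in the affine chart at $p$; (4) differentiate and evaluate, showing the Jacobian has rank $4$; (5) conclude that $p$, hence every point of the locus by equivariance, is a smooth point of $\sigma_4(v_3(\PP^3))$. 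The main obstacle is step (4): one must choose the representative cubic form and the local coordinates cleverly enough that the evaluation of these degree-$5$ polynomials' partials is tractable by hand (or organized so that a short symbolic computation suffices), and one must make sure the ``four quintics'' from Lemma~\ref{iso-outside-xi} are exactly the ones adapted to the $x_3$-direction so that their linear parts in the transverse coordinates are manifestly independent — this is presumably why the lemma singles out those four, and the argument hinges on that adaptation.
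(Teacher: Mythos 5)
There is a genuine gap in your reduction step. The set $\sigma_{4}(v_3(\PP^2))\setminus\sigma_{3}(v_3(\PP^3))$ is \emph{not} a single orbit under $\GL_3$ acting on the plane (plus scaling): $\sigma_4(v_3(\PP^2))=\lin{v_3(\PP^2)}$ is all of $\PP^9$, so the locus consists of all ternary cubics of border rank $\geq 4$, which splits into infinitely many orbits (already the smooth cubics carry the $j$-invariant as a modulus) together with several special orbits such as $x_0x_1x_2$, $x_0(x_0^2+x_1x_2)$, and the cuspidal/nodal normal forms. Verifying smoothness at one general point $p=v_3(q_1)+\dots+v_3(q_4)$ only gives smoothness on a dense open subset of the locus, not at \emph{all} of its points as the corollary asserts. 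This is exactly why the paper's proof invokes the normal form classification of ternary cubics (Landsberg, table 10.4.1) and runs the check separately for each of the four normal forms i)--iv).

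The mechanism you propose is also heavier than necessary, and the paper's route avoids the Jacobian computation entirely: Lemma~\ref{iso-outside-xi} already shows that $\sigma_4(v_3(\PP^3))\cap D(\xi)=V(F_1,F_2,F_3,G)\cap D(\xi)$ is globally smooth, because on $D(\xi)$ the four quintics solve for $s_{2001},s_{2010},s_{2100},s_{3000}$ as rational functions of the remaining $16$ coordinates, exhibiting this open set as a graph over $\PP^{15}\setminus V'(\xi)$. Hence the only pointwise verification needed is the scalar condition $\xi(w)\neq 0$, which the paper carries out for each normal form (obtaining $\xi(w)^2=1,1,1,a^2$). If you want to keep your Jacobian strategy, you would have to perform the rank-$4$ computation at a representative of \emph{every} orbit in the locus, i.e.\ for all four normal forms including the parameter family iv); as written, your step (1)--(2) does not justify restricting to a single point.
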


Next, we investigate what kind of a $\sigma_k(v_d(\PP^n))$ is either $k$-secant variety of minimal degree or del Pezzo $k$-secant variety.
In the case of $n=2$, the degree of $\sigma_k(v_d(\PP^2))$ is known for any $k\le 8, k-1\le d$
due to \cite{ElS};
hence it is possible to determine whether $\sigma_k(v_d(\PP^2))$ in this range is of minimal degree or not.
In \cite{Ott}, it was shown that $\sigma_7(v_3(\PP^4))$ is a hypersurface of degree $15$;
in particular, it is a $7$-del Pezzo.
Theorem~\ref{thm_m1} gives the case $(k,d,n) = (4, 3, 3)$.
In addition, by a careful consideration of symmetry and combinatorial linear relations on coefficients of any polynomial of a given weight (Proposition \ref{coeff-f-in-ker}), in \textsection{}\ref{sec:some-small-degrees} we obtain more new results in this direction.
Table~\ref{tab:types-of-some-secant} summarizes them, where $e$ is the codimension of
$\sigma_k(v_d(\PP^n))$ in $\PP^{N=\binom{n+d}{d}-1}$.

\begin{table}
  \begin{tabular}{lrrll}
    \hline\hline
    \((k,d,n)\) & \(N\) & \(e\) & Type of small degree & Comments or References\\
    \hline\hline
    \((2, 3, 2)\) & 9 & 4 & 2-minimal & classical, \cite{ElS}\\
    \hline
    \((3, 3, 2)\) & 9 & 1 & 3-minimal & Aronhold hypersurface, \cite{Ott}\\
    \hline
    \((2, 4, 2)\) & 14 & 9 & not 2-minimal, not 2-del Pezzo & Subsection \textsection{}\ref{sec:other-higher-secant}\\
    \hline
    \((3, 4, 2)\) & 14 & 6 & 3-del Pezzo & Subsection \textsection{}\ref{subsect_dP varietes}, \cite{CK} \\
    \hline
    \((4, 4, 2)\) & 14 & 3 & 4-minimal & \cite{ElS,CR}\\
    \hline
    \((5, 4, 2)\) & 14 & 1 & 5-minimal  & 5-defective,  Clebsch quartics cut out by $\det\phi_{2,2}$\\
    \hline
    \((5, 5, 2)\) & 20 & 6 & not 5-minimal, not 5-del Pezzo & Subsection \textsection{}\ref{sec:other-higher-secant}\\
    \hline
    \((6, 5, 2)\) & 20 & 3 & 6-del Pezzo & Subsection \textsection{}\ref{subsect_dP varietes}\\
    \hline
    \((7, 5, 2)\) & 20 & 0 & 7-minimal & \cite{CR}, fill the ambient space\\
    \hline
   \((8, 6, 2)\) & 27 & 4 & not 8-minimal, not 8-del Pezzo & Subsection \textsection{}\ref{sec:other-higher-secant}\\
    \hline
    \((9, 6, 2)\) & 27 & 1 & 9-minimal & \cite{ElS}\\
    \hline
    \((3, 3, 3)\) & 19 & 8 & not 3-minimal, not 3-del Pezzo & Subsection \textsection{}\ref{sec:other-higher-secant}\\
    \hline
    \((4, 3, 3)\) & 19 & 4 & 4-del Pezzo & Theorem~\ref{thm_m1}\\
    \hline
    \((5, 3, 3)\) & 19 & 0 & 5-minimal & fill the ambient space\\
    \hline
    \((7, 4, 3)\) & 34 & 7 & not 7-minimal, not 7-del Pezzo & Subsection \textsection{}\ref{sec:other-higher-secant}\\
    \hline
    \((8, 4, 3)\) & 34 & 3 & 8-minimal & Subsection \textsection{}\ref{subsect_k-VMD}\\
    \hline
    \((9, 4, 3)\) & 34 & 1 & 9-minimal  & 9-defective, hypersurface cut out by $\det\phi_{2,2}$\\
    \hline
    \((6, 3, 4)\) & 34 & 5 & not 6-minimal, not 6-del Pezzo & Subsection \textsection{}\ref{sec:other-higher-secant}\\
    \hline
    \((7, 3, 4)\) & 34 & 1 & 7-del Pezzo & 7-defective case, \cite{Ott}\\
    \hline
    \((8, 3, 4)\) & 34 & 0 & 8-minimal & fill the ambient space\\
    \hline
    \((13, 4, 4)\) & 69 & 5 & unknown & out of memory with our current system\\
    \hline
    \((14, 4, 4)\) & 69 & 1 & 14-minimal & 14-defective, hypersurface cut out by $\det\phi_{2,2}$\\
    \hline
    \((9, 3, 5)\) & 55 & 2 & not 9-minimal, not 9-del Pezzo & Subsection \textsection{}\ref{sec:other-higher-secant}\\
    \hline
  \end{tabular}
  \caption{Types of some higher secant varieties of small degree}
   \label{tab:types-of-some-secant}
\end{table}

Finally, we would like to note that the two cases
$(k,d,n) = (3, 4, 2)$ and $(4, 3, 3)$, which provide del Pezzo higher secant varieties
of relatively large codimension $e=6,4$, are of a special kind in the following sense: 

\begin{remark}\label{s4v3P3-smoothness-rem}
These cases $(k,d,n)=(3, 4, 2)$ and $(4, 3, 3)$ in Theorem~\ref{thm_m1} are \textit{the only two exceptions}
  in the `trichotomy pattern' for (non-)singularity
  of subsecant loci of higher secant varieties
  (see \cite[remark~4(e)]{FH}).
  In fact, since
  $\sigma_4(v_3(\PP^3))$ is smooth at
  any point outside $\bigcup_{\PP^2 \subset \PP^3} \sigma_{4}(v_3(\PP^2))$, we can obtain $\Sing(\sigma_4(v_3(\PP^3)))=\sigma_3(v_3(\PP^3))$ using Corollary~\ref{s4v3P3-smoothness} (see \cite[example~6]{FH}).
\end{remark}

The paper is organized as follows.
In \textsection{}\ref{sect_prolong},
we study the structure of prolongation on the direct sum of weight spaces with respect to the Veronese embedding $v_d: \Pn \rightarrow \PP^{N}$, and derive a combinatorial linear relations for coefficients of polynomials of a given weight (Proposition~\ref{prop-dsum-beta} and Corollary~\ref{coeff-f-in-ker}). It is useful for calculating the vector space $I(\sigma_k(v_d(\Pn)))_{k+1}$ exactly by hands or by symbolic computations.
In \textsection{}\ref{sect_s4(v3(P3))},
we focus on $\sigma_4(v_3(\PP^3)) \subset \PP^{19}$ and the generators of its defining prime ideal.
Two polynomials $F$ and $H$ are given as Pfaffians of $10 \times 10$ skew symmetric matrices, whereas the polynomial $G$ is presented in a direct manner (in \textsection{}\ref{sec:minim-free-resol} we also express $G$ by a function in terms of minors of differentials in the free resolution). Then $36$ polynomials are given by polynomials $F, G, H$ and automorphisms on the coordinate ring of $\PP^{19}$ induced from the automorphisms of $\PP^3$.
In fact we show that the vector space $I(\sigma_4(v_3(\PP^3)))_5$ is spanned by these $36$ polynomials.
Applying a characterization of del Pezzo higher secant varieties in \cite{CK},
we finish the proof of Theorem~\ref{thm_m1}.
In \textsection{}\ref{sec:some-small-degrees}, 
we compute the direct summands of $I(\sigma_k(v_d(\Pn)))_{k+1}$ for some $\sigma_k(v_d(\Pn))$'s and characterize them in terms of higher secant varieties of minimal or next-to-minimal degrees.

\section{Prolongation and Weight space decomposition}\label{sect_prolong}

We use the following notation for the $d$-uple Veronese embedding
$v_d: \Pn \rightarrow \PN$ with $N = \binom{n+d}{d}-1$.
Let $t_0, t_1, \dots, t_n$ be the homogeneous coordinates on $\Pn$,
and let $\set{s_\alpha}$ be the homogeneous coordinates on $\PN$ with
$\alpha = (a_0, a_1, \dots, a_n) \in \ZZ_{\geq 0}^{n+1}$
and $|\alpha| = \sum a_i = d$,
where the associated ring homomorphism 
\begin{gather}\label{eq-hom-h}
  v_d^*: S(\PN) = \CC[\set{s_\alpha}] \rightarrow S(\Pn) = \CC[t_0, t_1, \dots, t_n]
\end{gather}
is defined by
$v_d^*(s_\alpha) = t^{\alpha} = t_0^{a_0}t_1^{a_1} \dots t_n^{a_n}$.
We take $\mathfrak{S}_{n+1}$ as the full symmetric group consisting of permutations of $n+1$ symbols $\set{0,1,\dots,n}$. For a permutation $\tau \in \mathfrak{S}_{n+1}$,
we set
\begin{gather}\label{eq:aut-phi-sigma}
  \phi_{\tau}:\CC[\{s_\alpha\}] \rightarrow \CC[\{s_\alpha\}]
\end{gather}
to be the automorphism
sending $s_\alpha=s_{(a_0, a_1, \dots, a_n)} \mapsto s_{\tau \alpha}:=s_{(a_{\tau(0)}, a_{\tau(1)}, \dots, a_{\tau(n)})}$.

We set $L_{\beta} \subset \CC[\set{s_\alpha}]_{\frac{|\beta|}{d}}$
to be a vector subspace
generated by $(v_d^*)^{-1}(t^{\beta})$ for $\beta \in \ZZ_{\geq 0}^{n+1}$,
where a monomial $s_{\alpha_1}^{i_1} \dotsm\, s_{\alpha_{\mu}}^{i_{\mu}} \in L_{\beta}$
if and only if $i_1 \alpha_1+\dots+i_{\mu} \alpha_{\mu} = \beta$ in $\ZZ_{\geq 0}^{n+1}$.

For an integer $e > 0$,
we have a direct sum decomposition of $\CC[\set{s_\alpha}]_e$ as
\begin{gather*}
  \CC[\set{s_\alpha}]_e = \bigoplus_{|\beta| = de} L_{\beta}.
\end{gather*}
Note that $L_{\beta}\cap L_{\beta'}=\{0\}$ if $\beta\not=\beta'$, and 
$L_{\tau \beta} = \phi_{\tau}(L_{\beta})$
for any $\beta = (b_0, b_1, \dots, b_n) \in \ZZ_{\geq 0}^{n+1}$ and $\tau \in \mathfrak{S}_{n+1}$,
where
$\tau\beta = (b_{\tau(0)}, b_{\tau(1)}, \dots, b_{\tau(n)})$.
We say that a homogeneous polynomial $f \in \CC[\set{s_\alpha}]$
is of \emph{weight} $\beta$ over $\CC[t_0, t_1, \dots, t_n]$
if $f \in L_{\beta}$; in other words,
$v_d^*(m) \in \CC[t_0, t_1, \dots, t_n]$ is of the same weight $\beta$
for every monomial term $m$ of $f$.
In this setting, it holds that $|\beta| = d \cdot \deg(f)$.
Additionally, we say that $f$ is of \emph{weight $\beta$ up to permutations}
if $f$ is of weight $\tau\beta$ for some $\tau \in \mathfrak{S}_{n+1}$.

\medskip

Next, we recall the theory of prolongation (for more details, e.g. \cite[\textsection{}7.5]{La} and \cite[\textsection{}1 and 2]{SS}). For a vector subspace $A\subset S^d V^\ast$,
the \emph{$p$-th prolongation $A^{(p)}$ of $A$} is defined as
\[
  A^{(p)} = \left\{\,G\in S^{d+p} V^\ast \ \middle|\ \text{$\frac{\partial^p G}{\partial \mbf{x}^\alpha}\in A$ for any $\alpha$ with $|\alpha|=p$}\,\right\}.
\]
Then it is known that $I(\sigma_k(X))_{k} = 0$ and $I(\sigma_k(X))_{k+1}=I(X)_2^{(k-1)}$ for any non-degenerate variety $X\subset \P V$ (see e.g. \cite[theorem 7.5.3.4]{La}).
\medskip

In our setting, $X = v_d(\Pn) \subset \PN$.
Then, since $I(X)_2 = \ker(v_d^*)$,
it follows that $$I(\sigma_k(X))_{k+1}=\ker(v_d^*)^{(k-1)} ~.$$
Now, let $\Psi = \Psi^{(k)}$ be the composition $\oplus v_d^*\circ \mathcal{D}$ of two linear maps:
\begin{gather*}
  \mathcal{D}\ :\
  \CC[\set{s_\alpha}]_{k+1} \;\rightarrow
  \bigoplus_{m=s_{\alpha_1}^{j_1} \dotsm\, s_{\alpha_{\mu}}^{j_{\mu}}}
  \CC[\set{s_\alpha}]_{2}
  \cdot e(m)\quad,
\end{gather*}
sending $f \in \CC[\set{s_\alpha}]_{k+1}$ to
the direct sum of $\frac{\partial^{k-1} f}{(\partial s_{\alpha_1})^{j_1} \dotsm\, (\partial s_{\alpha_{\mu}})^{j_{\mu}}}$
with $m=s_{\alpha_1}^{j_1} \dotsm\, s_{\alpha_{\mu}}^{j_{\mu}}$, and
\begin{gather*}
  \oplus v_d^*
  \ :\
  \bigoplus_{m=s_{\alpha_1}^{j_1} \dotsm\, s_{\alpha_{\mu}}^{j_{\mu}}}
  \CC[\set{s_\alpha}]_{2}
  \cdot e(m)
  \;\rightarrow
  \bigoplus_{m=s_{\alpha_1}^{j_1} \dotsm\, s_{\alpha_{\mu}}^{j_{\mu}}}
  \CC[t_0, t_1, \dots, t_n]_{2d}
  \cdot e(m)\quad,
\end{gather*}
where $m=s_{\alpha_1}^{j_1} \dotsm\, s_{\alpha_{\mu}}^{j_{\mu}}$'s are taken among all the monomials of
$\CC[\set{s_\alpha}]_{k-1}$
(i.e., $j_1+\dots+j_{\mu} = k-1$, $\forall j_p\ge0$, $\mu \geq 1$)
and $e(m)$'s are basis of free modules
$\bigoplus \CC[\set{s_\alpha}] \cdot e(m)$
or
$\bigoplus \CC[t_0, t_1, \dots, t_n] \cdot e(m)$.
Then, $I(\sigma_k(v_d(\Pn)))_{k+1} = \ker(\Psi)$.
For a monomial
$m = s_{\alpha_1}^{j_1} \dotsm\, s_{\alpha_{\mu}}^{j_{\mu}} \in \CC[\set{s_\alpha}]_{k-1}$,
setting
\begin{gather}\label{Psi-m-f}
  \Psi_m(f) = \Psi_m^{(k)}(f) = v_d^* \left(\frac{\partial^{k-1} f}{((\partial s_{\alpha_1})^{j_1} \dotsm\, (\partial s_{\alpha_{\mu}})^{j_{\mu}})}\right),
\end{gather}
we have $\Psi(f) = \sum_{m \in \CC[\set{s_\alpha}]_{k-1}} \Psi_m(f) \cdot e(m) \in \bigoplus_{m \in \CC[\set{s_\alpha}]_{k-1}} \CC[t_0, t_1, \dots, t_n]_{2d} \cdot e(m)$.
\medskip

In general, since the dimension of the vector space $\CC[\set{s_\alpha}]_e$ is very large, it may be difficult to calculate the kernel of $\Psi$ directly.
So, we discuss how to represent such a large space as a direct sum of smaller subspaces behaving well under $\Psi$.

Combining with weights $\beta \in \ZZ_{\geq 0}^{n+1}$,
we describe the linear map $\Psi|_{L_{\beta}}$ and
have a direct sum decomposition (\ref{eq:dsum-vmdeg}) as below.
Let us a partial ordering on weights by denoting $\delta \subset \beta$ if $\beta-\delta \in \ZZ_{\geq 0}^{n+1}$.

\begin{proposition}\label{prop-dsum-beta}
  Let $\Psi = \Psi^{(k)}$ be as above.
  Let $\beta \in \ZZ_{\geq 0}^{n+1}$ with $|\beta| = d(k+1)$,
  and let $L_{\beta} \subset \CC[\set{s_\alpha}]_{k+1}$ as above. Let us consider
  a basis of $L_{\beta}$,
  $\{\frac{1}{i_1 ! \dotsm i_{\mu}!}\, s_{\alpha_1}^{i_1} \dotsm\, s_{\alpha_{\mu}}^{i_{\mu}}\}$
  with $i_1 \alpha_1+\dots+i_{\mu} \alpha_{\mu} = \beta$.
  Then,
  the restricted linear map $\Psi|_{L_{\beta}}$ is given by
  \begin{gather}\label{eq:Psi-basis}
    \Psi\left(\frac{1}{i_1 ! \dotsm i_{\mu}!}\, s_{\alpha_1}^{i_1} \dotsm\, s_{\alpha_{\mu}}^{i_{\mu}}\right) = \sum_{\substack{
        \delta \subset \beta,\ |\delta| = d(k-1), \\
        \forall m=s_{\alpha_1}^{j_1} \cdots s_{\alpha_{\mu}}^{j_{\mu}} \in L_{\delta},\\
        j_p\le i_p~\forall p    
      }}\,
    \frac{1}{(i_1-j_1) ! \dotsm (i_{\mu}-j_{\mu})!}\,
    t^{\beta-\delta} e(m)\quad,
  \end{gather}
  where $(i_1-j_1)! \dotsm (i_{\mu}-j_{\mu})! \in \set{1,2}$, i.e.,
  the value takes $1$ if two of $i_1-j_1, \dots, i_{\mu}-j_{\mu}$ are $1$ and others are $0$,
  or takes $2$ if one of $i_1-j_1, \dots, i_{\mu}-j_{\mu}$ is $2$ and others are $0$.
  As a result, it holds the direct sum decomposition of
  the map $\Psi = \bigoplus_{|\beta| = d(k+1)} \Psi|_{L_{\beta}}$
  and of the vector space
  \begin{gather}\label{eq:dsum-vmdeg}
    I(\sigma_k(v_d(\Pn)))_{k+1} = \ker(\Psi)
    = \bigoplus_{|\beta| = d(k+1)} K_{\beta}
  \end{gather}
  with $K_{\beta} = K_{\beta}^{(k)} := \ker(\Psi|_{L_{\beta}})$.
  Note that $K_{\tau\beta} = \phi_{\tau}(K_{\beta})$ and
  $\dim K_{\tau\beta} = \dim K_{\beta}$ for each $\tau \in \mathfrak{S}_{n+1}$.
\end{proposition}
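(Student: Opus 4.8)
The plan is to verify the formula \eqref{eq:Psi-basis} by direct computation and then observe that it forces the claimed direct sum decomposition purely formally. First I would compute $\Psi_m$ applied to a single basis monomial of $L_\beta$. Writing $u = \frac{1}{i_1!\dotsm i_\mu!}\,s_{\alpha_1}^{i_1}\dotsm s_{\alpha_\mu}^{i_\mu}$ and $m = s_{\alpha_1}^{j_1}\dotsm s_{\alpha_\mu}^{j_\mu}$ with $|m| = k-1$ and $j_p \le i_p$ for all $p$, the iterated partial derivative $\partial^{k-1}u/(\partial s_{\alpha_1})^{j_1}\dotsm(\partial s_{\alpha_\mu})^{j_\mu}$ is $\frac{1}{(i_1-j_1)!\dotsm(i_\mu-j_\mu)!}\,s_{\alpha_1}^{i_1-j_1}\dotsm s_{\alpha_\mu}^{i_\mu-j_\mu}$, because the product of falling factorials $i_p(i_p-1)\dotsm(i_p-j_p+1)$ exactly cancels against the $1/i_p!$ and leaves $1/(i_p-j_p)!$. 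Applying $v_d^*$ turns $s_{\alpha_p}^{i_p-j_p}$ into $t^{(i_p-j_p)\alpha_p}$, and since $\sum_p(i_p-j_p)\alpha_p = \beta - \delta$ where $\delta = \sum_p j_p\alpha_p$, the $e(m)$-component of $\Psi(u)$ is $\frac{1}{(i_1-j_1)!\dotsm(i_\mu-j_\mu)!}\,t^{\beta-\delta}$. If some $j_p > i_p$ the derivative vanishes, which is why the sum is restricted to $m \in L_\delta$ with $j_p \le i_p$; and if $m$ is not of the form $s_{\alpha_1}^{j_1}\dotsm s_{\alpha_\mu}^{j_\mu}$ (i.e. involves a variable not appearing in $u$) the derivative is again zero. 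Finally, since $|m| = k-1$ forces $i_1-j_1+\dots+i_\mu-j_\mu = (k+1)-(k-1) = 2$, each nonzero factorial factor is either $1$ (two of the differences equal $1$) or $2$ (one difference equals $2$), giving the stated parenthetical. This establishes \eqref{eq:Psi-basis}.

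Next I would deduce the decomposition. The key structural point extracted from \eqref{eq:Psi-basis} is that $\Psi(L_\beta) \subseteq \bigoplus_{\delta\subset\beta,\,|\delta|=d(k-1)}\CC[t_0,\dots,t_n]_{2d}\cdot e(m)$ lands entirely in the span of basis vectors $e(m)$ with $m \in L_\delta$ for $\delta \subset \beta$, and moreover every monomial term $t^{\beta-\delta}e(m)$ that appears has total $t$-weight $\beta - \delta + \delta = \beta$ in the combined grading (the $t$-exponent plus the weight of $m$). Hence if I assign to the target free module the grading in which the summand $\CC[t_0,\dots,t_n]_{2d}\cdot e(m)$ for $m\in L_\delta$ carries, in its degree-$(\beta-\delta)$ piece, the total weight $\beta$, then $\Psi$ is homogeneous of weight $0$ with respect to the weight-$\beta$ grading on the source $\CC[\set{s_\alpha}]_{k+1} = \bigoplus_\beta L_\beta$. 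A weight-homogeneous linear map has kernel equal to the direct sum of the kernels of its weight-graded pieces, so $\ker\Psi = \bigoplus_{|\beta|=d(k+1)} \ker(\Psi|_{L_\beta}) = \bigoplus_\beta K_\beta$. Combined with $I(\sigma_k(v_d(\Pn)))_{k+1} = \ker\Psi$ from the discussion preceding the proposition, this gives \eqref{eq:dsum-vmdeg} and the accompanying $\Psi = \bigoplus_\beta \Psi|_{L_\beta}$.

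For the final sentence, I would use the already-recorded facts $L_{\tau\beta} = \phi_\tau(L_\beta)$ and the equivariance of the construction under $\mathfrak{S}_{n+1}$: the map $\Psi$ commutes (up to relabeling the index monomials $m \mapsto \phi_\tau(m)$, which is a bijection on the monomial basis of $\CC[\set{s_\alpha}]_{k-1}$) with $\phi_\tau$, because $v_d^*\circ\phi_\tau$ equals the relabeling of variables $t$ composed with $v_d^*$, and differentiation commutes with a linear change of the $s_\alpha$-coordinates. Therefore $\phi_\tau$ carries $K_\beta = \ker(\Psi|_{L_\beta})$ isomorphically onto $\ker(\Psi|_{L_{\tau\beta}}) = K_{\tau\beta}$, whence $K_{\tau\beta} = \phi_\tau(K_\beta)$ and $\dim K_{\tau\beta} = \dim K_\beta$ since $\phi_\tau$ is an isomorphism.

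I expect the main obstacle to be purely bookkeeping rather than conceptual: keeping the multi-index factorial normalizations straight so that the falling-factorial coefficients collapse to exactly $1/((i_1-j_1)!\dotsm(i_\mu-j_\mu)!)$, and being careful that the indexing sets (which $m$'s actually occur, i.e. $m \in L_\delta$ with $\delta\subset\beta$ and $j_p\le i_p$) match on both sides. The direct-sum claim \eqref{eq:dsum-vmdeg} itself is then immediate from weight-homogeneity and needs no further work beyond noting that the $L_\beta$ genuinely give a direct sum (already stated above: $L_\beta\cap L_{\beta'}=0$ for $\beta\ne\beta'$).
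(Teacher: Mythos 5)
Your proposal is correct and follows essentially the same route as the paper: the same direct computation of the iterated partial derivatives yields \eqref{eq:Psi-basis}, and your weight-grading on the target is just a repackaging of the paper's observation that $\Psi(L_\beta)$ lies in the span of the basis vectors $t^{\beta-\delta}e(m)$ with $m\in L_\delta$, $\delta\subset\beta$, which determine $\beta$, so the images for distinct $\beta$ are independent and the kernel splits. The equivariance statement is handled the same way via $L_{\tau\beta}=\phi_\tau(L_\beta)$.
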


\begin{proof}
  Let us take
  $f={\frac{1}{i_1 ! \dotsm i_{\mu}!}\, s_{\alpha_1}^{i_1} \dotsm\, s_{\alpha_{\mu}}^{i_{\mu}}}$
  and $j_1+\dots+j_{\mu} = k-1$ with $j_p \leq i_p$ for any $p = 1, \dots, \mu$.
  Setting $\delta = j_1\alpha_1+\dots+j_{\mu}\alpha_{\mu}$ and $m = s_{\alpha_1}^{j_1}, \dots, s_{\alpha_{\mu}}^{j_{\mu}}$, we have
  \begin{align*}
    \Psi_m(f) &= 
    v_d^*\left(\frac{\partial^{k-1}f}{(\partial s_{\alpha_1})^{j_1} \dotsm\, (\partial s_{\alpha_{\mu}})^{j_{\mu}}}\right)
    =
    \frac{1}{(i_1-j_1) ! \dotsm (i_{\mu}-j_{\mu})!}\,
    v_d^*(s_{\alpha_1}^{i_1-j_1} \dotsm\, s_{\alpha_{\mu}}^{i_{\mu}-j_{\mu}}) \\
    &=
    \frac{1}{(i_1-j_1) ! \dotsm (i_{\mu}-j_{\mu})!}\,
    t^{\beta-\delta}\quad.
  \end{align*}

  Note that, the conditions $|\delta| = d(k-1)$ and
  $j_1\alpha_1+\dots+j_{\mu}\alpha_{\mu} = \delta$
  implies
  $j_1+\dots+j_{\mu} = k-1$.
  In particular, $(i_1+\dots+i_{\mu})-(j_1+\dots+j_{\mu}) = 2$.

  The image
  $\Psi(L_{\beta})$ is spanned by
  $\{t^{\beta-\delta} e(s_{\alpha_1}^{j_1} \dots s_{\alpha_{\mu}}^{j_{\mu}})\}$
  with $\delta \subset \beta$
  such that
  $|\delta| = d(k-1)$ and $j_1\alpha_1+\dots+j_{\mu}\alpha_{\mu} = \delta$.
  Suppose that
  $\Psi(L_{\beta}) \cap \Psi(L_{\beta}') \neq 0$. Then we have
  $t^{\beta-\delta} e(s_{\alpha_1}^{j_1} \dots s_{\alpha_{\mu}}^{j_{\mu}}) = t^{\beta'-\delta'} e(s_{\alpha_1'}^{k_1} \dots s_{\alpha_{\nu}'}^{k_{\nu}})$,
  an equality of some two vectors of the basis.
  By definition, it follows that
  $t^{\beta-\delta} = t^{\beta'-\delta'}$ and
  $e(s_{\alpha_1}^{j_1} \dots s_{\alpha_{\mu}}^{j_{\mu}}) = e(s_{\alpha_1'}^{k_1} \dots s_{\alpha_{\nu}'}^{k_{\nu}})$.
  Since $j_1\alpha_1+\dots+j_{\mu}\alpha_{\mu} = \delta$ and $k_1\alpha_1'+\dots+k_{\nu}\alpha_{\nu}' = \delta'$, we have $\delta = \delta'$, and then $\beta = \beta'$. Therefore
  $\Psi(L_{\beta}) \cap \Psi(L_{\beta'}) = 0$ if $\beta \neq \beta'$.

  As a result, since
  $\CC[\set{s_\alpha}]_{k+1} = \bigoplus L_{\beta}$
  and $\im(\Psi) = \bigoplus \Psi(L_{\beta})$,
  we have
  $\Psi = \bigoplus \Psi|_{L_{\beta}}$.
\end{proof}

Proposition \ref{prop-dsum-beta} gives a combinatorial condition for any $f \in K_{\beta}$ as imposing linear relations on the coefficients of $f$. Let $M(L_{\epsilon})$ be the set of monomials belonging to $L_{\epsilon}$
for $\epsilon \in \ZZ_{\geq 0}^{n+1}$.

\begin{corollary}\label{coeff-f-in-ker}
  Let $c(f; s_{\alpha_1}^{i_1} \dotsm\, s_{\alpha_{\mu}}^{i_{\mu}})$ be the 
  coefficient of $f \in L_{\beta} \subset \CC[\set{s_\alpha}]_{k+1}$
  with respect to a vector
  $\frac{1}{i_1 ! \dotsm i_{\mu}!}\, s_{\alpha_1}^{i_1} \dotsm\, s_{\alpha_{\mu}}^{i_{\mu}}$
  of the basis of $L_{\beta}$.
  Then the following two conditions are equivalent:
  \begin{enumerate}[(i)]
  \item 
    $f \in K_{\beta}=\ker(\Psi|_{L_{\beta}})$.
  \item 
    For any $\epsilon \subset \beta$ with $|\epsilon| = 2d$
    and for any $m \in M(L_{\beta-\epsilon}) \subset \CC[\set{s_\alpha}]_{k-1}$,
    it holds
    \begin{gather*}
      \sum_{q \in M(L_{\epsilon})}
      \frac{1}{v(q)}\, c(f; mq) = 0,
    \end{gather*}
    where $v(q) = 1$ if $q = s_{\alpha_1}s_{\alpha_2}$ for some $\alpha_1 \neq \alpha_2$, and
    $v(q) = 2$ if $q = s_{\alpha}^2$ for some $\alpha$
    (in particular, $c(f; mq) = 0$ if $M(L_{\epsilon}) = \set{q}$, a singleton).
  \end{enumerate}
\end{corollary}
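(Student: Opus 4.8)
The plan is to derive Corollary~\ref{coeff-f-in-ker} as a direct unwinding of the formula~\eqref{eq:Psi-basis} for $\Psi|_{L_\beta}$ proved in Proposition~\ref{prop-dsum-beta}. First I would fix a weight $\beta$ with $|\beta| = d(k+1)$ and write a general element $f \in L_\beta$ in the given basis as $f = \sum c(f; s_{\alpha_1}^{i_1}\dotsm s_{\alpha_\mu}^{i_\mu})\cdot \frac{1}{i_1!\dotsm i_\mu!} s_{\alpha_1}^{i_1}\dotsm s_{\alpha_\mu}^{i_\mu}$, the sum running over all monomials in $M(L_\beta)$. Applying $\Psi$ and using linearity together with~\eqref{eq:Psi-basis}, one collects, for each monomial $m \in M(L_\delta)$ with $\delta \subset \beta$ and $|\delta| = d(k-1)$, the coefficient of the basis vector $t^{\beta-\delta}e(m)$ of the target. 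Since Proposition~\ref{prop-dsum-beta} already shows the monomials $t^{\beta-\delta}e(m)$ are linearly independent in $\bigoplus_m \CC[t_0,\dots,t_n]_{2d}\cdot e(m)$ (distinct $(\delta,m)$ give distinct basis vectors), $\Psi(f) = 0$ is equivalent to the vanishing of each such coefficient separately.

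Next I would make the change of variables $\epsilon := \beta - \delta$, so $|\epsilon| = 2d$ and $\epsilon \subset \beta$, and $m$ ranges over $M(L_{\beta-\epsilon})$. For a fixed pair $(m,\epsilon)$ with $m = s_{\alpha_1}^{j_1}\dotsm s_{\alpha_\mu}^{j_\mu} \in M(L_{\beta-\epsilon})$, I would identify exactly which basis monomials of $L_\beta$ contribute to the coefficient of $t^{\beta-\delta}e(m) = t^\epsilon e(m)$ under $\Psi$: according to~\eqref{eq:Psi-basis}, a basis vector coming from $s_{\alpha_1}^{i_1}\dotsm s_{\alpha_\mu}^{i_\mu}$ contributes precisely when $j_p \le i_p$ for all $p$ and $\sum(i_p - j_p)\alpha_p = \epsilon$; in that case $q := \prod s_{\alpha_p}^{i_p - j_p}$ is a degree-$2$ monomial lying in $M(L_\epsilon)$, the product $mq$ recovers the basis monomial, and the contributed coefficient is $\frac{1}{(i_1-j_1)!\dotsm(i_\mu-j_\mu)!}c(f; mq) = \frac{1}{v(q)}c(f;mq)$ since $(i_1-j_1)!\dotsm(i_\mu-j_\mu)! = v(q) \in \{1,2\}$. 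Conversely every $q \in M(L_\epsilon)$ arises this way from the basis monomial $mq$. Summing over all contributing monomials gives exactly $\sum_{q \in M(L_\epsilon)} \frac{1}{v(q)}c(f;mq)$, and setting this to zero for all $(m,\epsilon)$ is condition~(ii). The parenthetical remark about singletons is the special case: if $M(L_\epsilon) = \{q\}$ the single relation forces $c(f;mq) = 0$.

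The only genuine subtlety — and the step I would be most careful with — is the bookkeeping of multiplicities and the correspondence between pairs $(m,q)$ and basis monomials of $L_\beta$. One must check that the map $(m,q) \mapsto mq$ is well-defined and, restricted to pairs with $m \in M(L_{\beta-\epsilon})$ and $q \in M(L_\epsilon)$, surjects onto the monomials of $L_\beta$ that actually appear, and that the factorization $\beta = (\beta-\epsilon) + \epsilon$ at the level of weights does \emph{not} uniquely determine the factorization $mq$ at the level of monomials (the same basis monomial of $L_\beta$ may split in several ways, but each split is counted once, under a \emph{different} $\epsilon$ or a \emph{different} pair $(m,q)$, so no double counting occurs within a fixed $(m,\epsilon)$). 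This is purely combinatorial and follows from the defining property of the spaces $L_\gamma$ (a monomial $s_{\alpha_1}^{i_1}\dotsm s_{\alpha_\mu}^{i_\mu}$ lies in $L_\gamma$ iff $\sum i_p\alpha_p = \gamma$); once it is laid out, the corollary is immediate from Proposition~\ref{prop-dsum-beta}.
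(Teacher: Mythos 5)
Your proposal is correct and follows essentially the same route as the paper's proof: both unwind the formula~(\ref{eq:Psi-basis}), use the linear independence of the vectors $t^{\beta-\delta}e(m)$ in the target, and pass from $(\delta, (j_p), (i_p))$ to $(\epsilon, m, q)$ via $\epsilon = \beta-\delta$ and $q = s_{\alpha_1}^{i_1-j_1}\dotsm s_{\alpha_\mu}^{i_\mu-j_\mu}$, identifying $(i_1-j_1)!\dotsm(i_\mu-j_\mu)!$ with $v(q)$. Your extra care with the bijection between contributing basis monomials of $L_\beta$ and elements $q \in M(L_\epsilon)$ for fixed $(m,\epsilon)$ is exactly the bookkeeping the paper leaves implicit.
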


\begin{proof}
  Fix $\beta \in \ZZ_{\geq 0}^{n+1}$. Then
  $t^{\beta-\delta} e(s_{\alpha_1}^{j_1} \dotsm s_{\alpha_{\mu}}^{j_{\mu}})$'s
  are linearly independent in the vector space
  $\bigoplus \CC[t_0, t_1, \dots, t_n]_{2d} \cdot e(s_{\alpha_1}^{j_1} \dotsm s_{\alpha_{\mu}}^{j_{\mu}})$.

  From (\ref{eq:Psi-basis}),
  $\Psi(f) = 0$
  if and only if
  $\sum_{(i_1, \dots, i_{\mu}) \supset (j_1, \dots, j_{\mu})}
  \frac{1}{(i_1-j_1) ! \dotsm (i_{\mu}-j_{\mu})!} c(f; s_{\alpha_1}^{i_1} \dotsm\, s_{\alpha_{\mu}}^{i_{\mu}}) = 0$
  for any
  $\delta \subset \beta$ with $|\delta| = d(k-1)$ and for any
  $s_{\alpha_1}^{j_1} \dotsm s_{\alpha_{\mu}}^{j_{\mu}} \in L_{\delta}$.
  Considering $\epsilon = \beta-\delta$ and $q=s_{\alpha_1}^{i_1-j_1} \dotsm s_{\alpha_{\mu}}^{i_{\mu}-j_{\mu}} \in L_{\epsilon}$,
  we get the conditon (ii).
\end{proof}

Next we discuss the case when $\beta = (b_0, b_1, \dots, b_n)$ has symmetry,
e.g., $b_{i_1} = b_{i_2}$ for some $i_1 \neq i_2$,
which we will use in \textsection{}\ref{sec:some-small-degrees}.
In this case, since $\phi_{(i_1 i_2)}(K_{\beta}) = K_{\beta}$ for the permutation $(i_1 i_2)$,
it can be further decomposed as the sum of smaller subspaces.

In general, let $\sA \subset \Aut(\CC[\set{s_\alpha}]_{k+1})$
be any finite subgroup of the linear automorphism group and $W \subset \CC[\set{s_\alpha}]_{k+1}$ be a subspace satisfying $\phi(W) = W$ for each $\phi \in \sA$; we call such a subspace an \emph{$\sA$-invariant subspace} of $\CC[\set{s_\alpha}]_{k+1}$.
Then, for any $\sA$-invariant subspace $W$, we take
$\lambda_{\sA}: W \rightarrow W$ to be the averaging linear operator $f \mapsto \lambda_{\sA}(f) := \frac{1}{\#\sA}\sum_{\phi \in \sA} \phi(f)$. Since $\phi(\lambda_{\sA}(f)) = \lambda_{\sA}(f)$ for any $f\in W$, it follows that
$\lambda_{\sA}(\lambda_{\sA}(f)) = \lambda_{\sA}(f)$
and
$\lambda_{\sA}(\im(\lambda_{\sA})) = \im(\lambda_{\sA})$ and $W \simeq \im(\lambda_{\sA}) \oplus \ker(\lambda_{\sA})$.

\begin{lemma}\label{lem:decomp-K}
  Let $\sA, W$ be as above.
  Let $K \subset W$ be also an $\sA$-invariant subspace.
  Then,
  $K \cap \im(\lambda_{\sA}) = \im(\lambda_{\sA}|_K)$
  and
  $K \cap \ker(\lambda_{\sA}) = \ker(\lambda_{\sA}|_K)$.
  Hence
  \begin{gather*}
    K \simeq (K \cap \im(\lambda_{\sA})) \oplus (K \cap \ker(\lambda_{\sA})).
  \end{gather*}
\end{lemma}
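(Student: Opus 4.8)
The plan is to exploit the idempotence of $\lambda_{\sA}$ restricted to any $\sA$-invariant subspace. The key structural fact is that $\lambda_{\sA}|_K$ is a projection: since $K$ is $\sA$-invariant, $\lambda_{\sA}$ maps $K$ into $K$, and the computation $\lambda_{\sA}(\lambda_{\sA}(f)) = \lambda_{\sA}(f)$ (already noted in the excerpt) shows $\lambda_{\sA}|_K$ is idempotent. Hence $K = \im(\lambda_{\sA}|_K) \oplus \ker(\lambda_{\sA}|_K)$, which is exactly the desired direct sum decomposition once we identify these two summands with $K \cap \im(\lambda_{\sA})$ and $K \cap \ker(\lambda_{\sA})$ respectively. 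So the substance of the proof is just those two set-theoretic identities.

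For $K \cap \ker(\lambda_{\sA}) = \ker(\lambda_{\sA}|_K)$ there is nothing to do: an element $f \in K$ lies in $\ker(\lambda_{\sA}|_K)$ iff $\lambda_{\sA}(f) = 0$ iff $f \in \ker(\lambda_{\sA})$, so both sides equal $\{f \in K : \lambda_{\sA}(f)=0\}$. For $K \cap \im(\lambda_{\sA}) = \im(\lambda_{\sA}|_K)$ the inclusion $\supseteq$ is immediate since $\lambda_{\sA}(K) \subset K$ (by $\sA$-invariance of $K$) and $\lambda_{\sA}(K) \subset \im(\lambda_{\sA})$ trivially. For $\subseteq$, take $f \in K$ with $f = \lambda_{\sA}(g)$ for some $g \in W$; then applying $\lambda_{\sA}$ and using idempotence gives $\lambda_{\sA}(f) = \lambda_{\sA}(\lambda_{\sA}(g)) = \lambda_{\sA}(g) = f$, so $f = \lambda_{\sA}(f) = \lambda_{\sA}|_K(f) \in \im(\lambda_{\sA}|_K)$. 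This is the one place where the argument uses something slightly beyond formal nonsense — namely that a fixed point of $\lambda_{\sA}$ inside $\im(\lambda_{\sA})$ is automatically in the image of the restriction — but it reduces immediately to idempotence.

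Finally, to get the displayed isomorphism, I would combine the two identities with the projection decomposition $K = \im(\lambda_{\sA}|_K) \oplus \ker(\lambda_{\sA}|_K)$ for the idempotent operator $\lambda_{\sA}|_K$ on $K$. I do not anticipate any genuine obstacle here; the only thing to be careful about is keeping straight the distinction between $\lambda_{\sA}$ as an operator on $W$ and its restriction to $K$, and verifying that the restriction does land in $K$ (which is precisely the hypothesis that $K$ is $\sA$-invariant). Everything is elementary linear algebra over $\CC$ applied to the Reynolds-type averaging operator, with all the needed idempotence already established in the paragraph preceding the lemma.
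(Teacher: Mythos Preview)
Your proposal is correct and follows essentially the same route as the paper's own proof: both verify that $\lambda_{\sA}$ restricts to an operator on $K$ by $\sA$-invariance, note that the kernel identity is trivial, and establish the nontrivial inclusion $K \cap \im(\lambda_{\sA}) \subset \im(\lambda_{\sA}|_K)$ by applying idempotence to an element $f = \lambda_{\sA}(g) \in K$ to obtain $f = \lambda_{\sA}(f)$. The only cosmetic difference is that you explicitly name $\lambda_{\sA}|_K$ as a projection to deduce the direct-sum decomposition, whereas the paper leaves that step implicit.
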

\begin{proof}
  Since $\phi(K) = K$ for all $\phi \in \sA$,
  we have $\lambda_{\sA}(K) \subset K$.
  It induces a linear map $\lambda_{\sA}|_K: K \rightarrow K$.
  Then, $K \cap \ker(\lambda_{\sA}) = \ker(\lambda_{\sA}|_K)$. Also, it is immediate to see that 
  $\im(\lambda_{\sA}|_K) \subset K \cap \im(\lambda_{\sA})$.

  On the other hands, for any $g \in K \cap \im(\lambda_{\sA})$ (i.e., $g=\lambda_{\sA}(f) \in K$ for some $f\in W$) we have $g=\lambda_{\sA}(g) = \lambda_{\sA}(\lambda_{\sA}(f))$, which means $g\in \im(\lambda_{\sA}|_K)$.
  Hence $\im(\lambda_{\sA}|_K) = K \cap \im(\lambda_{\sA})$.
\end{proof}

\begin{remark}[The case when $K = \ker(\Psi|_W)$]\label{rem:W-sA-basis}
  Let $\sA, W$ be as above.
  Take $K = \ker(\Psi|_W)$, which is also an $\sA$-invariant subspace
  since so is $I(\sigma_k(v_d(\Pn)))_{k+1}$.
  To compute $K \cap \ker(\lambda_{\sA}) = \ker(\Psi|_{\ker(\lambda_{\sA})})$ and
  $K \cap \im(\lambda_{\sA}) = \ker(\Psi|_{\im(\lambda_{\sA})})$,
  we may need to know bases of
  subspaces $\im(\lambda_{\sA})$ and $\ker(\lambda_{\sA})$.
  \medskip

  \begin{inparaenum}[(a)]
  \item \label{rem:W-sA-basis-a}
    We write a finite set $\sA f = \set{\phi(f) \mid \forall\phi \in \sA}$ for any $f \in W$
    ($\#\sA f < \# \sA$ can occur,
    for example, in the case when $\phi(f) = f$ for some $\phi\in\sA$). We say that $r$ polynomials \textit{$f_1, \dots, f_r$ give a $\sA$-basis of $W$}
    if $\sA f_i \cap \sA f_j = \emptyset$
    for $i \neq j$ and the union $\bigcup_{i=1}^r \sA f_i$ gives a basis of $W$.
    In this case,
    $\lambda_{\sA}(f_1), \dots, \lambda_{\sA}(f_r)$ give a basis of $\im(\lambda_{\sA})$.
    \medskip

  \item
    In order to study $\ker(\lambda_{\sA})$,
    we set $a = \#\sA$, $\sA = \set{\phi_1, \dots, \phi_{a}}$, and take
    a $a \times (a -1)$ matrix $M = [M_1\ \dots\ M_{a-1}]$ over $\CC$ of rank $a-1$ such that
    the $j$-th column vector $M_j = [\{M_{l,j}\}_{l=1}^{a}]$ satisfies
    $\sum_{l=1}^{a} M_{l,j} = 0$ for each $1 \leq j \leq a-1$.
    Setting $\sA f M_j = [\phi_1(f)\ \dots\ \phi_a(f)] \cdot M_j$, we have
    $\sA f M_j \in \ker \lambda_{\sA}$. On the other hands,
    \begin{gather*}
      [\phi_1(f)\ \dots\ \phi_a(f)] \cdot M = [\sA f M_1\ \dots\ \sA f M_{a-1}].
    \end{gather*}
    We write $\sA f^M = \set{A f M_1, \dots, \sA f M_{a-1}}$, the set of $A f M_j$'s.
    ($\# \sA f^M < a = \#\sA$ can occur).
    For a $\sA$-basis $f_1, \dots, f_r$ of $W$,
    the union $\bigcup_{i=1}^r \sA f_i^M$ gives a basis of $\ker(\lambda_{\sA})$.
    The reason is as follows.

    Since $f_1, \dots, f_r$ give a $\sA$-basis, $f \in W$ is written by
    a sum of members of $\bigcup_{i=1}^r \sA f_i$ over $\CC$.
    Let $C(f; \phi(f_i)) \in \CC$ be the coefficient of $f$
    with respect to $\phi(f_i)$.
    If $f \in \ker(\lambda_{\sA})$, then $\sum_{l=1}^a C(f; \phi_l(f_i)) = 0$ for each $1 \leq i \leq r$. Taking $a \times a$ matrix $N_i = [M : C_i]$ with a column vector $C_i = [\set{C(f; \phi_l(f_i))}_{l=1}^a]$,
    since the sum of row vectors of $N_i$ is equal to the zero row vector,
    we have $\rk N_i = a-1$; in particular, $C_i$
    depends on column vectors $M_1, \dots, M_{a-1}$.
    Hence
    $\sA f_i C_i = \sum_{l=1}^a C(f; \phi_l(f_i)) \cdot \phi_l(f_i)$
    is written as a sum of $\sA f_i M_1, \dots, \sA f_i M_{a-1}$.
    It means that $f$ is a sum of members of $\bigcup_{i=1}^r \sA f_i^M$.

    \medskip

  \item
    We also have a chance to reduce the computation of $\Psi(f) = \sum \Psi_m(f) e(m)$
    in terms of a finite group.
    Recall that $\Psi_m(f)$ be as in the formula~(\ref{Psi-m-f}) for a monomial $m$ of degree $k-1$.
    It holds $\Psi_{\phi_{\tau}(m)}(\phi_{\tau}(f)) = \Psi_{m}f$ for a permutation $\tau$.
    Let $G \subset \mathfrak{S}_{n+1}$ be a subgroup,
    and let $f$ satisfy $\phi_{\tau}(f) \in \CC f$ for any $\tau \in G$.
    Then $\Psi_{\phi_{\tau}(m)}f = 0$ for any $\tau \in G$ if and only if $\Psi_m(f) = 0$.

    Let $\sA = \set{\phi_{\tau} \mid \tau \in G}$,
    and take monomials $m_1, \dots, m_s$ giving $\sA$-basis of $\CC[\set{s_\alpha}]_{k-1}$.
    If a subspace $W' \subset W \subset \CC[\set{s_\alpha}]_{k+1}$ satisfies
    $\phi_{\tau}(f) \in \CC f$ for any $f \in W'$ (e.g., $W' = \im(\lambda_{\sA})$),
    then instead of computing $\Psi_m|_{W'}$ for all $m$,
    it is sufficient to compute it for $m_1, \dots, m_s$.
  \end{inparaenum}
\end{remark}

\begin{remark}\label{split-inv-ker-im}
  \begin{inparaenum}[(a)]
  \item \label{im-ker-sigma}
    Let $\beta \in \ZZ_{\geq 0}^{n+1}$ satisfy
    $b_{i_1} = b_{i_2}$ for some $i_1 \neq i_2$,
    and consider the permutation $\tau = (i_1 i_2)$
    (e.g., $\beta=(10,9,9,8)$ and $\tau = (12)$).
    Taking the group $\lin{\phi_{\tau}} = \set{\id, \phi_{\tau}}$ of order $2$,
    we have a linear map
    $\lambda_{\tau} = \lambda_{\lin{\phi_{\tau}}}: L_{\beta} \rightarrow L_{\beta}$
    defined by $\lambda_{\tau}(f) = \frac{1}{2}(f + \phi_{\tau}(f))$,
    and $L_{\beta} \simeq \im(\lambda_{\tau}) \oplus \ker(\lambda_{\tau})$.

    The vector space $\im(\lambda_{\tau})$ is generated by
    $m+\phi_{\tau}(m)$ with monomials $m \in M(L_{\beta})$.
    On the other hand, if $f \in L_{\beta}$ is contained in $\ker(\lambda_{\tau})$, then
    the coefficients $c(f; m)$ and $c(f; \phi_{\tau}(m))$ satisfy
    $c(f; m)+c(f; \phi_{\tau}(m)) = 0$.
    It means that $\ker(\lambda_{\tau})$ is generated by
    $m-\phi_{\tau}(m)$ with $m \in M(L_{\beta})$.
    In this way, we may choose bases of vector spaces
    $\im(\lambda_{\tau})$ and $\ker(\lambda_{\tau})$
    as in (a) and (b) of Remark~\ref{rem:W-sA-basis}.
    If a monomial $m$ satisfies $\phi_{\tau}(m) = m$,
    then $m \in \im(\lambda_{\tau})$ and $m \notin \ker(\lambda_{\tau})$.

    We write $L_{\beta}[\tau] := \im(\lambda_{\tau})$ and $L_{\beta}[-\tau] := \ker(\lambda_{\tau})$.
    For the kernel $K_{\beta} = \ker(\Psi|_{L_{\beta}})$,
    setting $K_{\beta}[\pm\tau]$ by
    $K_{\beta}[+\tau] = K_{\beta}[\tau] := K_{\beta} \cap (L_{\beta}[\tau])$ and
    $K_{\beta}[-\tau] := K_{\beta} \cap (L_{\beta}[-\tau])$,
    and using Lemma~\ref{lem:decomp-K},
    we have
    \begin{gather*}
      K_{\beta} \simeq K_{\beta}[\tau] \oplus K_{\beta}[-\tau].
    \end{gather*}

  \item
    In the setting of (\ref{im-ker-sigma}), we in addition assume that $\beta \in \ZZ_{\geq 0}^{n+1}$ satisfies
    $b_{j_1} = b_{j_2}$ for some $j_1 \neq j_2$ with
    $j_1, j_2 \notin \{i_1, i_2\}$,
    and take $\tau' = (j_1 j_2)$; then $\tau$ and $\tau'$ are disjoint
    (e.g., $\beta=(10,10,8,8)$, $\tau = (01)$, and $\tau' = (23)$).
    It follows that $\phi_{\tau'}(L_{\beta}[\pm \tau]) = L_{\beta}[\pm \tau]$.
    We write $L_{\beta}[\pm \tau, \tau'] := \lambda_{\tau'}(L_{\beta}[\pm \tau])$
    and
    $L_{\beta}[\pm \tau, -\tau'] = \ker(\lambda_{\tau'}) \cap (L_{\beta}[\pm \tau])$.
    The vector spaces
    $L_{\beta}[\tau, \pm\tau']$, $L_{\beta}[-\tau, \pm\tau']$
    are generated by
    \begin{gather*}
      m+\phi_{\tau}(m) \pm \phi_{\tau'}(m) \pm \phi_{\tau'\tau}(m),\;
      m-\phi_{\tau}(m) \pm \phi_{\tau'}(m) \mp\phi_{\tau'\tau}(m)
    \end{gather*}
    with 
    $m \in M(L_{\beta})$,
    respectively.
    Setting $K_{\beta}[\pm\tau, \pm\tau'] := K_{\beta} \cap (L_{\beta}[\pm\tau, \pm\tau'])$, we have a decomposition
    \begin{gather*}
      K_{\beta} \simeq K_{\beta}[\tau, \tau'] \oplus K_{\beta}[\tau, -\tau'] \oplus K_{\beta}[-\tau, \tau'] \oplus K_{\beta}[-\tau, -\tau'].
    \end{gather*}

  \end{inparaenum}
\end{remark}

\section{Equations and Degree of $\sigma_4(v_3(\PP^3)) \subset \PP^{19}$ }\label{sect_s4(v3(P3))}

\subsection{36 quintic polynomials}\label{sub_sect_FGH}

In this section, we study the $4$-secant variety $\sigma_4(v_3(\PP^3))$ of the Veronese embedding
$v_3: \PP^3 \rightarrow \PP^{19}$ and
the ring homomorphism
$v_3^*: S(\PP^{19}) = \CC[\set{s_\alpha}] \rightarrow \CC[t_0,t_1,t_2,t_3]$.
From Proposition~\ref{prop-dsum-beta}, in order to find generators of
$I(\sigma_4(v_3(\PP^3)))_{5}$,
it is sufficient to calculate the kernerl $K_{\beta} = \ker(\Psi|_{L_{\beta}})$ for each direct summand $L_{\beta}$ of $\CC[\set{s_\alpha}]_5$
with $\beta \in \ZZ_{\geq 0}^4$ and $|\beta| = 15$.
We first present $3$ polynomials $F,G,H$ of $\CC[\set{s_\alpha}]_5$, each of which is contained in $K_{\beta}$
with $\beta=(2,4,4,5),(3,4,4,4),(3,3,4,5)$ respectively (Definitions~\ref{def-FH-1} and \ref{def-G}), and next
give a minimal set of polynomials spanning
$I(\sigma_4(v_3(\PP^3)))_{5}$ in Theorem~\ref{FGH-prolong}.

\begin{definition}\label{def-FGH-mat-A1}
  We set $A_1$ to be the followig $11 \times 11$ skew-symmetric matrix,
  where $2445, \dots, 3435$ are \emph{labels of columns},
  and $4554, \dots, 3564$ are \emph{labels of rows} (These labels are not entries of the matrix).
  \begin{gather*}
    \begin{matrix}
      \ &2445&3534&3543&3453&3444(1)&3354&2544&2454&3345&3444(2)&3435
      \\
      4554&0&s_{1020}&s_{1011}&s_{1101}&s_{1110}&s_{1200}&s_{2010}&s_{2100}&0&0&0
      \\
      3465&-s_{1020}&0&0&s_{0012}&0&s_{0111}&0&s_{1011}&s_{0120}&s_{0021}&s_{0030}
      \\
      3456&-s_{1011}&0&0&s_{0003}&0&s_{0102}&0&s_{1002}&s_{0111}&s_{0012}&s_{0021}
      \\
      3546&-s_{1101}&-s_{0012}&-s_{0003}&0&-s_{0102}&0&-s_{1002}&0&s_{0201}&s_{0102}&s_{0111}
      \\
      3555&-s_{1110}&0&0&s_{0102}&0&s_{0201}&0&s_{1101}&s_{0210}&s_{0111}&s_{0120}
      \\
      3645&-s_{1200}&-s_{0111}&-s_{0102}&0&-s_{0201}&0&-s_{1101}&0&s_{0300}&s_{0201}&s_{0210}
      \\
      4455&-s_{2010}&0&0&s_{1002}&0&s_{1101}&0&s_{2001}&s_{1110}&s_{1011}&s_{1020}
      \\
      4545&-s_{2100}&-s_{1011}&-s_{1002}&0&-s_{1101}&0&-s_{2001}&0&s_{1200}&s_{1101}&s_{1110}
      \\
      3654&0&-s_{0120}&-s_{0111}&-s_{0201}&-s_{0210}&-s_{0300}&-s_{1110}&-s_{1200}&0&0&0
      \\
      3555&0&-s_{0021}&-s_{0012}&-s_{0102}&-s_{0111}&-s_{0201}&-s_{1011}&-s_{1101}&0&0&0
      \\
      3564&0&-s_{0030}&-s_{0021}&-s_{0111}&-s_{0120}&-s_{0210}&-s_{1020}&-s_{1110}&0&0&0
    \end{matrix}
  \end{gather*}
  
  Since there are two columns having $3444$,
  to distinguish them we add $(1), (2)$.
  Note that the subscript of the monomial of each entry of $A_1$ is equal
  to the label of the row minus the label of column (e.g. for the $(1,2)$-entry $s_{1020}$,
  we have $(1,0,2,0) = (4,5,5,4) - (3,5,3,4)$).
  We may calculate weights of minors of $A_1$ over $\CC[t_0,t_1,t_2,t_3]$
  by using these labels.
  Also, note that the sum of labels of columns is
  $(2,4,4,5) + \dots + (3,4,3,5) = (30, 45, 45, 45)$, whereas the sum of labels of rows is
  $(4,5,5,4) + \dots + (3,5,6,4) = (36, 54, 54, 54)$.
\end{definition}

\begin{lemma}\label{lem:A_1-rk2}
  We have $\rank(A_1(w)) = 2$ for any $w \in v_3(\PP^3)$.
  Hence, for any $p \times p$ submatrix $L$ of $A_1$ with $p > 2k$,
  it holds $\det(L) \in I(\sigma_k(v_3(\PP^3)))$.
\end{lemma}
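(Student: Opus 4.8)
The plan is to show that $A_1(w)$ has rank at most $2$ for every $w\in v_3(\PP^3)$ (with equality for general $w$, which is all the applications need), and then to deduce the assertion about submatrices from the linearity of $A_1$ in the coordinates $s_\alpha$ together with subadditivity of rank. The only step with genuine content is the first one --- producing an explicit rank-$2$ presentation of $A_1$ on the Veronese; everything after that is formal.

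\textbf{Step 1.} Let $A_1(v_3)$ denote the $11\times 11$ skew-symmetric matrix over $\CC[t_0,\dots,t_3]$ obtained from $A_1$ by the substitution $s_\alpha\mapsto t^\alpha$ (each entry becoming a cubic monomial or $0$). I would establish the matrix identity
\[
  t_3\cdot A_1(v_3)\;=\;P\,Q^{\top}-Q\,P^{\top}
\]
over $\CC[t_0,\dots,t_3]$, where, indexing rows and columns $1,\dots,11$ in the order printed in Definition~\ref{def-FGH-mat-A1},
\[
  P^{\top}=(\,t_0t_2,\ 0,\ 0,\ -t_3^2,\ 0,\ -t_1t_3,\ 0,\ -t_0t_3,\ -t_1t_2,\ -t_2t_3,\ -t_2^2\,),
\]
\[
  Q^{\top}=(\,t_0t_1,\ t_2t_3,\ t_3^2,\ 0,\ t_1t_3,\ 0,\ t_0t_3,\ 0,\ -t_1^2,\ -t_1t_3,\ -t_1t_2\,).
\]
The check is a routine entrywise verification, but the underlying reason is the block structure of $A_1$: grouping the indices as $G_1=\{1,9,10,11\}$, $G_2=\{2,3,5,7\}$, $G_3=\{4,6,8\}$, the three diagonal blocks of $A_1(v_3)$ vanish, while the $(G_1,G_2)$-, $(G_1,G_3)$-, $(G_2,G_3)$-blocks equal $-t_2\,\mathbf a\mathbf b^{\top}$, $-t_1\,\mathbf a\mathbf c^{\top}$, $t_3\,\mathbf b\mathbf c^{\top}$ respectively, with $\mathbf a=(-t_0,t_1,t_3,t_2)$ along $G_1$, $\mathbf b=(t_2,t_3,t_1,t_0)$ along $G_2$, $\mathbf c=(t_3,t_1,t_0)$ along $G_3$. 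Assembling these three rank-one outer products into one skew expression $PQ^{\top}-QP^{\top}$ is exactly what forces the scalar $t_3$ onto the left-hand side (over the function field one has $A_1(v_3)=u\,v^{\top}-v\,u^{\top}$ with $u=Q$, $v=-P/t_3$). In particular the right-hand side has rank $\le 2$ at every point of $\CC^4$.

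\textbf{Step 2.} Since $PQ^{\top}-QP^{\top}$ has rank $\le 2$ everywhere, each of its $3\times 3$ minors vanishes identically in $\CC[t_0,\dots,t_3]$. The corresponding $3\times 3$ minor of $t_3\cdot A_1(v_3)$ equals $t_3^{3}\cdot v_3^*(D)$, where $D$ is the $3\times 3$ minor of $A_1$ in the same rows and columns; as $\CC[t_0,\dots,t_3]$ is a domain, $v_3^*(D)=0$, i.e.\ $D$ vanishes on $v_3(\PP^3)$. Hence every $3\times 3$ minor of $A_1$ vanishes on $v_3(\PP^3)$, which says precisely that $\rank A_1(w)\le 2$ for all $w\in v_3(\PP^3)$; and the rank is exactly $2$ for general $w$ because $A_1$ is not identically zero.

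\textbf{Step 3.} Every entry of $A_1$ is a coordinate $s_\alpha$ or $0$, so $w\mapsto A_1(w)$ is linear and homogeneous; in particular $\rank A_1(w)\le 2$ for every $w$ in the affine cone over $v_3(\PP^3)$. The points $w_1+\cdots+w_k$ with each $w_i$ in that cone form a dense subset of the affine cone over $\sigma_k(v_3(\PP^3))$, and for such a point subadditivity of rank gives $\rank A_1(w_1+\cdots+w_k)\le 2k$. Consequently, for any $p\times p$ submatrix $L$ of $A_1$ with $p>2k$, the homogeneous polynomial $\det L\in\CC[\{s_\alpha\}]_{p}$ vanishes on a dense subset of $\sigma_k(v_3(\PP^3))$, hence on all of it; that is, $\det L\in I(\sigma_k(v_3(\PP^3)))$. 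As noted, the crux is Step 1 --- identifying the vectors $P,Q$, equivalently the $3$-block decomposition making the off-diagonal blocks rank one on the Veronese; the verification itself and Steps 2--3 are mechanical.
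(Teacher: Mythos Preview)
Your proof is correct. Both you and the paper establish $\rank A_1(w)\le 2$ on $v_3(\PP^3)$ (which is all that is used; the ``$=2$'' in the statement actually fails at isolated points such as $v_3([1{:}0{:}0{:}0])$, since $s_{3000}$ does not occur in $A_1$), and then conclude via linearity and subadditivity of rank exactly as you do in Step~3.

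The routes to $\rank\le 2$ differ. The paper works over the function field and simply checks that every row of $v_3^*(A_1)$ is a $\CC(t_0,\dots,t_3)$-combination of the two rows labeled $4554$ and $3465$ (e.g.\ $v_{3654}=-t_1/t_0\cdot v_{4554}$); this is quicker to write down. You instead produce a polynomial identity $t_3\,A_1(v_3)=PQ^{\top}-QP^{\top}$ by uncovering the three-block partition $G_1,G_2,G_3$ in which the diagonal blocks vanish and each off-diagonal block is a rank-one outer product after substitution. Your approach is a bit more work but more structural: it explains \emph{why} the rank collapses in terms of the row/column label combinatorics, and the denominator-clearing argument in Step~2 is cleaner than tracking rational-function coefficients. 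Either way, Step~3 is identical to the paper's.
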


\begin{proof}
  To check $\rank(A_1(w)) = 2$ for any $w \in v_3(\PP^3)$,
  we consider the matrix $v_d^*(A_1)$ whose entries are monomials of $\CC[t_0,t_1,t_2,t_3]$ by evaluation.
  Setting $v_{l}$ to be the row of $v_d^*(A_1)$ with label $l$,
  we may check each row vector is written as a sum of two rows $v_{4554}$ and $v_{3465}$.
  For example, we have $v_{3654} = - t^{(-1,1,0,0)} v_{4554}$
  with $t^{(-1,1,0,0)} = {t_1}/{t_0}$.
  Similarly, $v_{3546} =  t^{(0,1,-2,1)} v_{3465} - t^{(-1,0,-1,2)} v_{4554}$.

  Next,
  let $p,k$ be numbers with $p > 2k$, and
  let $L$ be a $p \times p$ submatrix of $A_1$.
  For a general $w \in \sigma_k(v_3(\PP^3))$, one can write as $w = \sum_{i=1}^k w_i$ with $w_i \in v_3(\PP^3)$.
  Then $L(w) = \sum_{i=1}^k L(w_i)$ and by subadditivity of matrix rank,
  it follows $\rank(L(w)) \leq k \times 2 < p$.
  Then $\det(L) \in I(\sigma_k(v_3(\PP^3)))$.
\end{proof}

\begin{definition}\label{def-FH-1}
  We define $\xi \in I(\sigma_3(v_3(\PP^3)))_4$ and
  $F, H \in I(\sigma_4(v_3(\PP^3)))_5$ as follows.
  Let $F$ be the Pfaffian of $10\times 10$ skew-symmetric submatrix
  given by removing the 2445 column and the 4554 row from $A_1$.
  Then $F$ is of degree $5$.
  Moreover, $F^2$, the determinant of the matrix,
  is of weight
  $(36, 54, 54, 54) - (30, 45, 45, 45) - (4,5,5,4) + (2,4,4,5) = (4,8,8,10)$ over $\CC[t_0,t_1,t_2,t_3]$.
  Hence $F$ is of weight $(2,4,4,5)$. By Lemma~\ref{lem:A_1-rk2},
  $F^2 \in I(\sigma_4(v_3(\PP^3)))$ and hence
  $F \in I(\sigma_4(v_3(\PP^3)))$.
  Note that $F$ does not have variables $s_{2010}, s_{2100}, s_{3000}$.

  Let $\xi$ be the Pfaffian of $8\times 8$ skew-symmetric submatrix
  given by removing 2445, 2544, 2454 columns and 4554, 4455, 4545 rows from $A_1$.
  Then $\xi$ is of degree $4$ and of weight $(0,4,4,4)$.
  Using a recursive expansion of Pfaffians, we have
  \begin{gather*}
    F = s_{2001} \cdot \xi  + F'
  \end{gather*}
  where $F'$ does not have the variable $s_{2001}$.
  
  Let $H$ be the Pfaffian of $10\times 10$ skew-symmetric submatrix
  given by removing 3345 column and 3654 row from $A_1$,
  which is of degree $5$ and of weight $(3,3,4,5)$.
\end{definition}

\begin{definition}\label{def-G}
  Let $G^a, G^b \in \CC[\set{s_\alpha}]$ be two quintic polynomials as below,
where 
$\phi_{(12)}(G^a) = G^a$, $\phi_{(12)}(G^b) \neq G^b$.
We set $G \in \CC[\set{s_\alpha}]$ of degree $5$ by
\begin{gather*}
  G = s_{3000}\cdot \xi+G^a+G^b+\phi_{(12)}(G^b),
\end{gather*}
where it holds $\phi_{(12)}(G) = G$ and
the weight of $G$ is $(3,4,4,4)$. Also, one can check $\Psi|_{L_{(3,4,4,4)}}(G) = 0$ by a direct computation;
hence $G \in K_{(3,4,4,4)}$.
Note that in \textsection{}\ref{sec:minim-free-resol}, we will also give another expression of $G$ in terms of determinants of some matrices as well as $F$ and $H$ (see Proposition~\ref{prop:G-from-mat}).

{
\footnotesize

\begin{align*}
G^a =& -(s_{2001} s_{1110} s_{0003} - s_{2001} s_{1002} s_{0111} - s_{1110} s_{1002}^{2} + s_{1101} s_{1011} s_{1002}) (s_{0300} s_{0030} - s_{0210} s_{0120})\\
    &-2 (s_{1200} s_{1110} s_{1020} - s_{1110}^{3}) (s_{0111} s_{0003} - s_{0102} s_{0012})
    -(s_{1200} s_{1020} s_{1002} - 2 s_{1110} s_{1101} s_{1011}) (s_{0201} s_{0021} + s_{0111}^{2})\\
    &-s_{1110}^{2} s_{1002} (s_{0201} s_{0021} + 3 s_{0111}^{2})
\end{align*}
}

{
\footnotesize
\begin{align*}
G^b =&-(s_{2100} s_{1200} s_{0120} - 2 s_{2100} s_{1110} s_{0210} + s_{2100} s_{1020} s_{0300} - s_{1200}^{2} s_{1020} + s_{1200} s_{1110}^{2}) (s_{0021} s_{0003} - s_{0012}^{2})\\
&+(s_{2100} s_{1200} s_{0111} - 2 s_{2100} s_{1101} s_{0210} + s_{2001} s_{1200} s_{0210} - s_{1200}^{2} s_{1011} + s_{1200} s_{1110} s_{1101}) (s_{0030} s_{0003} - s_{0021} s_{0012})\\
&-(s_{2100} s_{1200} s_{0102} - 2 s_{2100} s_{1101} s_{0201} + s_{2100} s_{1002} s_{0300} - s_{1200}^{2} s_{1002} + s_{1200} s_{1101}^{2}) (s_{0030} s_{0012} - s_{0021}^{2})\\
&+(2 s_{2100} s_{1002} s_{0102} - s_{1200} s_{1002}^{2} + s_{1101}^{2} s_{1002}) (s_{0210} s_{0030} - s_{0120}^{2})\\
&-s_{2100} \bigl\{
 2 s_{1110} (s_{0120} s_{0111} s_{0003} - s_{0120} s_{0102} s_{0012} - s_{0111}^{2} s_{0012} + s_{0111} s_{0102} s_{0021})\\
& \quad \quad\quad  -2 s_{1101} (s_{0120}^{2} s_{0003} - 2 s_{0120} s_{0111} s_{0012} + s_{0111}^{2} s_{0021})\\
& \quad \quad\quad  -s_{1020} (s_{0210} s_{0111} s_{0003} - s_{0210} s_{0102} s_{0012} - s_{0201} s_{0111} s_{0012} + s_{0201} s_{0102} s_{0021})\\
& \quad \quad\quad -s_{1011} (s_{0300} s_{0030} s_{0003} - s_{0300} s_{0021} s_{0012} - s_{0210} s_{0120} s_{0003} + 2 s_{0210} s_{0111} s_{0012} - s_{0210} s_{0102} s_{0021} \\ 
& \quad \quad\quad\quad \quad\quad - s_{0201} s_{0120} s_{0012} + 2 s_{0201} s_{0111} s_{0021} - s_{0201} s_{0102} s_{0030} + 2 s_{0120} s_{0111} s_{0102} - 2 s_{0111}^{3})\\
& \quad \quad\quad -s_{1002} (s_{0210} s_{0120} s_{0012} - 3 s_{0210} s_{0111} s_{0021} + s_{0201} s_{0120} s_{0021} - s_{0201} s_{0111} s_{0030} + 2 s_{0120} s_{0111}^{2})
\bigr\}\\
&-s_{2001} \bigl\{
 s_{1200} (s_{0120}^{2} s_{0003} - s_{0120} s_{0111} s_{0012} - s_{0120} s_{0102} s_{0021} + s_{0111} s_{0102} s_{0030})-s_{1110} (s_{0300} s_{0021} s_{0012} - s_{0210} s_{0102} s_{0021})\\
& \quad \quad\quad +s_{1002} (s_{0300} s_{0120} s_{0021} - s_{0210}^{2} s_{0021})
\bigr\}\\
&+(s_{1200} s_{1011} s_{1002} - s_{1110} s_{1101} s_{1002}) (s_{0210} s_{0021} + s_{0201} s_{0030} - 2 s_{0120} s_{0111})\\
&+(3 s_{1200} s_{1110} s_{1011} - s_{1200} s_{1101} s_{1020} - 2 s_{1110}^{2} s_{1101}) s_{0120} s_{0003}\\
&-(s_{1200} s_{1110} s_{1011} + s_{1200} s_{1101} s_{1020} - 2 s_{1110}^{2} s_{1101}) s_{0102} s_{0021}
-2 (s_{1200} s_{1110} s_{1011} - s_{1200} s_{1101} s_{1020}) s_{0111} s_{0012}\\
&+2 (2 s_{1200} s_{1110} s_{1002} - s_{1200} s_{1101} s_{1011} - s_{1110} s_{1101}^{2}) s_{0111} s_{0021}
-(s_{1200} s_{1110} s_{1002} - s_{1200} s_{1101} s_{1011}) s_{0102} s_{0030}\\
&-(3 s_{1200} s_{1110} s_{1002} - s_{1200} s_{1101} s_{1011} - 2 s_{1110} s_{1101}^{2}) s_{0120} s_{0012}+(s_{1200} s_{1020} s_{1002} + 2 s_{1110}^{2} s_{1002} - 2 s_{1110} s_{1101} s_{1011}) s_{0210} s_{0012}\\
&
+s_{1200} s_{1011}^{2} s_{0111}^{2}
-s_{1200} s_{1011}^{2} s_{0120} s_{0102}
\end{align*}

}
\end{definition}

Furthermore, we note that one can have $11$ Pfaffians of degree $5$ whose weights correspond to labels of all columns of $A_1$ in a similar manner.

\begin{remark}
Let $\mathfrak{S}_{4}$ be the full symmetric group on $\set{0,1,2,3}$, and let
$\phi_{\tau}$ be the automorphism on $\CC[\set{s_\alpha}]$
for any $\tau \in \mathfrak{S}_{4}$ as in (\ref{eq:aut-phi-sigma}). Then we have $\phi_{(12)}(F) = F$ for $(12) \in \mathfrak{S}_4$ by computing directly or by the following discussion.
  Since $F \in I(\sigma_4(v_3(\PP^3)))$,
  we have $F \in K_{(2,4,4,5)} = \ker(\Psi|_{L_{(2,4,4,5)}})$
  in the setting of Proposition~\ref{prop-dsum-beta}.
  Later in Theorem~\ref{FGH-prolong} and Appendix~\ref{rem:coeff-36-poly},
  we in fact show $K_{(2,4,4,5)} = \CC F$.
  Since $\phi_{(12)}(F) \in K_{(2,4,4,5)}$,
  it holds $\phi_{(12)}(F) = cF$ with some $c \in \CC$.
  Comparing coefficients of some terms of $F$ and $\phi_{(12)}(F)$,
  we may check $\phi_{(12)}(F) = F$.

  Similarly,
  we have $\phi_{(12)}(H) = H$
  and
  $\phi_{(12)}(\xi) = \phi_{(13)}(\xi) = \phi_{(23)}(\xi) = \xi$.
\end{remark}

\begin{remark}
  As in Theorem~\ref{FGH-prolong}, $\dim K_{(3,4,4,4)} = 3$,
  which is not similar to the case of $\beta = (2,4,4,5), (3,3,4,5)$.
From the matrix $A_1$ in Definition~\ref{def-FGH-mat-A1}, it is possible to have $2$ Pfaffians belonging to $K_{(3,4,4,4)}$ by removing one of two 3444 columns and a corresponding row ;
  however, these Pfaffians do not have variable $s_{3000}$, and are not enough
  to generate $G$ and also the whole space $K_{(3,4,4,4)}$.
\end{remark}

Let us consider the open subset $D(\xi) = \PP^{19} \setminus V(\xi)$,
and take $4$ homogeneous polynomials
$F_1 = F$, $F_2=\phi_{(23)}(F)$, $F_3=\phi_{(13)}(F)$, $G$
of weight
$(2, 4, 4, 5), (2, 4, 5, 4), (2, 5, 4, 4), (3,4,4,4)$, respectively.
These polynomials are expressed as follows:
\begin{gather*}
  F_1 = s_{2001}\cdot \xi+F_1',\ \,
  F_2 = s_{2010}\cdot \xi+F_2',\ \,
  F_3 = s_{2100}\cdot \xi+F_3',\ \,
  G = s_{3000}\cdot \xi+G',
\end{gather*}
where $\xi, F_1', F_2', F_3'$
does not have variables $s_{2001}, s_{2010}, s_{2100}, s_{3000}$, and
$G'$
does not have the variable $s_{3000}$.
Then, in the localization ring $(\CC[\set{s_\alpha}]/(F_1, F_2, F_3, G))_{\xi}$,
we have
\begin{gather}\label{eq:qring_iso_local__s}
  s_{2001} = -F_1'/\xi,\ \,
  s_{2010} = -F_2'/\xi,\ \,
  s_{2100} = -F_3'/\xi,\ \,
  s_{3000} = -G'/\xi.
\end{gather}

Let $R = \CC[\set{s_\alpha \mid s_\alpha \neq s_{2001}, s_{2010}, s_{2100}, s_{3000}}]$,
a smaller polynomial ring of $16$ variables.
It implies a ring isomorphism
\begin{gather}\label{eq:qring_iso_local}
  \rho: (\CC[\set{s_\alpha}]/(F_1, F_2, F_3, G))_{\xi} \simeq R_{\xi},
\end{gather}
where $\xi$ is regarded as a polynomial of $R$ in the right hand side.

\begin{lemma}\label{iso-outside-xi}
  We have
  $\sigma_4(v_3(\PP^3)) \cap D(\xi) = V(F_1, F_2, F_3, G) \cap D(\xi)$.
  In particular, $\sigma_4(v_3(\PP^3)) \cap D(\xi)$
  is a smooth variety.
\end{lemma}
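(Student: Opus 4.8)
The plan is to establish the set-theoretic equality by proving the two inclusions and then to deduce smoothness from an explicit model of the common locus. The inclusion ``$\subseteq$'' is the routine one: $F_1 = F$ lies in $I(\sigma_4(v_3(\PP^3)))$ by Definition~\ref{def-FH-1} (which rests on Lemma~\ref{lem:A_1-rk2}), while $F_2 = \phi_{(23)}(F)$ and $F_3 = \phi_{(13)}(F)$ lie in it because $\phi_{(23)}$ and $\phi_{(13)}$ are the ring automorphisms induced by coordinate permutations of $\PP^3$, which fix $v_3(\PP^3)$ and hence $\sigma_4(v_3(\PP^3))$; and $G \in I(\sigma_4(v_3(\PP^3)))$ since $G \in K_{(3,4,4,4)} = \ker(\Psi|_{L_{(3,4,4,4)}}) \subseteq \ker\Psi = I(\sigma_4(v_3(\PP^3)))_5$ by Definition~\ref{def-G} and Proposition~\ref{prop-dsum-beta}. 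Hence $\sigma_4(v_3(\PP^3)) \subseteq V(F_1,F_2,F_3,G)$, and intersecting with $D(\xi)$ gives $\sigma_4(v_3(\PP^3)) \cap D(\xi) \subseteq V(F_1,F_2,F_3,G) \cap D(\xi)$.

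For the reverse inclusion I would use the isomorphism $\rho$ of~(\ref{eq:qring_iso_local}). One first checks that $\rho$ is a \emph{graded} isomorphism: this amounts to the degree bookkeeping $\deg s_{2001} = \deg F_1' - \deg\xi = 5-4 = 1$ (and likewise for $s_{2010}, s_{2100}, s_{3000}$), and it ensures that $\rho$ identifies $V(F_1,F_2,F_3,G) \cap D(\xi)$, as a scheme, with the open subset $D(\xi) \subseteq \Proj R \cong \PP^{15}$ (recall $R$ is a polynomial ring in $16$ variables and $\xi \in R$ is nonzero). In particular $V(F_1,F_2,F_3,G) \cap D(\xi)$ is integral and smooth of dimension $15$. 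On the other hand $\sigma_4(v_3(\PP^3))$ is an irreducible variety of dimension $15$ (the triple-Veronese case $(k,d,n) = (4,3,3)$ is non-defective, by Alexander--Hirschowitz), and $\xi$ does not vanish identically on it; for the latter it suffices to evaluate $\xi$ at one explicit point of $\sigma_4(v_3(\PP^3))$, or to observe that for a general $w = \sum_{i=1}^4 v_3(p_i)$ the relevant $8\times 8$ skew-symmetric submatrix of $A_1$ specializes to a sum of four skew forms of rank $\le 2$ on $\CC^8$, which is generically non-degenerate, so that the Pfaffian $\xi$ is not identically zero on $\sigma_4(v_3(\PP^3))$.

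It then follows formally that the two loci agree: $\sigma_4(v_3(\PP^3)) \cap D(\xi)$ is a nonempty open subset of the irreducible $15$-dimensional variety $\sigma_4(v_3(\PP^3))$, hence is itself irreducible of dimension $15$; being a closed subset of the irreducible $15$-dimensional variety $V(F_1,F_2,F_3,G) \cap D(\xi)$ of the same dimension, it must coincide with it. Smoothness of $\sigma_4(v_3(\PP^3)) \cap D(\xi)$ is then immediate from its identification with an open subscheme of $\PP^{15}$. The only genuinely non-formal ingredients are the verification that $\rho$ is graded and that $\xi \notin I(\sigma_4(v_3(\PP^3)))$; the former is a short degree count and the latter a single-point evaluation, so I expect no serious obstacle here.
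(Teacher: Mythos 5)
Your proof is correct and follows essentially the same route as the paper: both use the graded ring isomorphism $\rho$ of (\ref{eq:qring_iso_local}) to identify $V(F_1,F_2,F_3,G)\cap D(\xi)$ with the integral, smooth, $15$-dimensional open set $\PP^{15}\setminus V'(\xi)$, and then conclude equality by comparing with $\dim\sigma_4(v_3(\PP^3))=15$. You are slightly more explicit than the paper about two points it leaves implicit (that $\rho$ is graded, and that $\xi$ does not vanish identically on $\sigma_4(v_3(\PP^3))$, which is needed for nonemptiness and is witnessed later in the paper by the evaluations $\xi(w)\neq 0$ in the proof of Corollary~\ref{s4v3P3-smoothness}), but this is a refinement of the same argument, not a different one.
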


\begin{proof}
  Since $\sigma_4(v_3(\PP^3)) \subset V(F_1, F_2, F_3, G)$, we have
  $\sigma_4(v_3(\PP^3)) \cap D(\xi) \subset V(F_1, F_2, F_3, G) \cap D(\xi)$.
  Let us consider the coordinate ring of $V(F_1, F_2, F_3, G) \cap D(\xi)$,
  which is given as the ring of elements of degree $0$ of the left hand side of
  the isomorphism~(\ref{eq:qring_iso_local}).
  Since it is integral domain, $V(F_1, F_2, F_3, G) \cap D(\xi)$
  is an irreducible and reduced subset of dimension $15$.
  Since $\dim(\sigma_4(v_3(\PP^3))) = 15$,
  the equality of the statement holds.
  Moreover, regarding $\xi$ as an polynomial of $R$ and taking
  $V'(\xi) \subset \PP^{15}$ to be the zero set of $\xi \in R$,
  we have $V(F_1, F_2, F_3, G) \cap D(\xi) \simeq \PP^{15} \setminus V'(\xi)$,
  which is a smooth variety.
\end{proof}

\begin{proof}[Proof of Corollary \ref{s4v3P3-smoothness}]
  There is a normal form classification for ternary cubic forms (see e.g. \cite[table 10.4.1]{La}).
  Thus, to check non-singularity of any point $w$ in $\sigma_{4}(v_3(\PP^2))\setminus\sigma_{3}(v_3(\PP^3))$, we may assume that $w$ is one of the following forms
  i) $x_1^2 x_2-x_0^3-x_0 x_2^2$, ii) $x_0 x_1 x_2$,
  iii) $x_0(x_0^2 +x_1 x_2)$, iv) $x_1^2 x_2-x_0^3-a x_0 x_2^2-b x_2^3$ ($a,b\in\CC$: general).

  Recall that $\xi$ is the Pfaffian of a $8 \times 8$ skew symmetric matrix
  as in Definition~\ref{def-FH-1}.
  For each form $w$ in i)-iv) above,
  one can directly check $\xi(w) \neq 0$ (indeed, $\xi(w)^2 = 1, 1, 1, a^2$, respectively)
  by calculating the matrix.
  For example, for iv),
  $w$ corresponds to a point of $\PP^{19}$
  defined by $s_{0021} =1$, $s_{0300} =-1$, $s_{0102} =-a$, $s_{0003} =-b$ and other $s_\alpha=0$
  in our notation, and then the determinant of the matrix $\xi(w)^2 = a^2$.

  Hence, $w \in \sigma_4(v_3(\PP^3)) \cap D(\xi)$
  and is a non-singular point of $\sigma_4(v_3(\PP^3))$
  because of Lemma~\ref{iso-outside-xi}.
\end{proof}


\begin{theorem}\label{FGH-prolong}
  Let $K_{\beta} = \ker \Psi|_{L_{\beta}} \subset L_{\beta}$
  with $\beta \in \ZZ_{\geq 0}^4$ and $|\beta| = 15$,
  as in Proposition~\ref{prop-dsum-beta}.
  Then we have
  \begin{gather}\label{eq:FGH-prolong}
    K_{(2,4,4,5)} = \CC F,\
    K_{(3,4,4,4)} = \CC \langle G,\,\phi_{(23)}(G),\,\phi_{(13)}(G)\rangle,\
    K_{(3,3,4,5)} = \CC H,
  \end{gather}
  and $K_{\beta} = 0$
  for any $\beta \notin {\mathfrak{S}_{4}\cdot(2,4,4,5) \cup \mathfrak{S}_{4}\cdot(3,4,4,4) \cup \mathfrak{S}_{4}\cdot(3,3,4,5)}$.
  Hence it holds the decomposition of nonzero direct summands
  \begin{gather}\label{eq:FGH-prolong-2}
    I(\sigma_4(v_3(\PP^3)))_{5}
    \;= \bigoplus_{\beta \in {\mathfrak{S}_{4}\cdot(2,4,4,5) \cup \mathfrak{S}_{4}\cdot(3,4,4,4) \cup \mathfrak{S}_{4}\cdot(3,3,4,5)}} K_{\beta},
  \end{gather}
  and $\dim I(\sigma_4(v_3(\PP^3)))_{5} = 36$.
  In particular, a basis consisting of $36$ polynomials of $I(\sigma_4(v_3(\PP^3)))_5$
  is exactly obtained by $\phi_{\tau}(P)$ with $P = F,G,H$ and suitable $\tau \in \mathfrak{S}_4$.
\end{theorem}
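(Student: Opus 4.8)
The plan is to carry the weight-space decomposition of Proposition~\ref{prop-dsum-beta} all the way through. By that proposition, $I(\sigma_4(v_3(\PP^3)))_5=\ker\Psi=\bigoplus_{\beta\in\ZZ_{\geq0}^4,\ |\beta|=15}K_\beta$ and $\dim K_{\tau\beta}=\dim K_\beta$ for all $\tau\in\mathfrak{S}_4$, so it suffices to determine $K_\beta$ for one representative $\beta$ of each $\mathfrak{S}_4$-orbit on $\set{\beta : |\beta|=15}$ --- equivalently, for each partition of $15$ into at most $4$ parts. The theorem then reduces to two claims: (i) the polynomials $F,G,H$ and their $\mathfrak{S}_4$-translates span at least a $36$-dimensional subspace of the ideal; and (ii) $\sum_\beta\dim K_\beta\le 36$, after which equality is forced.

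For (i) I would first check membership and weights. The squares $F^2$ and $H^2$ are determinants of $10\times 10$ submatrices of $A_1$, hence lie in $I(\sigma_4(v_3(\PP^3)))$ by Lemma~\ref{lem:A_1-rk2}, so $F$ and $H$ do too since that ideal is radical; their weights $(2,4,4,5)$ and $(3,3,4,5)$ are those of Definition~\ref{def-FH-1}, giving $F\in K_{(2,4,4,5)}$, $H\in K_{(3,3,4,5)}$. Likewise $G\in K_{(3,4,4,4)}$ by the direct check in Definition~\ref{def-G}; since $(13)$ and $(23)$ fix the weight $(3,4,4,4)$ we get $\phi_{(13)}(K_{(3,4,4,4)})=K_{(3,4,4,4)}=\phi_{(23)}(K_{(3,4,4,4)})$, so $\phi_{(13)}(G),\phi_{(23)}(G)$ also lie in $K_{(3,4,4,4)}$. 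These three polynomials are linearly independent: their pairwise differences involve no $s_{3000}$ (as $\phi_{(13)},\phi_{(23)}$ fix both $s_{3000}$ and $\xi$), and independence of those differences is seen by comparing a few coefficients of the explicit forms of Definition~\ref{def-G} (recorded in Appendix~\ref{rem:coeff-36-poly}). Since the $L_\beta$'s are distinct direct summands of $\CC[\set{s_\alpha}]_5$, all of $F,G,H$ are nonzero (e.g.\ $F$ has the term $s_{2001}\xi\ne 0$), and $|\mathfrak{S}_4\cdot(2,4,4,5)|=|\mathfrak{S}_4\cdot(3,3,4,5)|=12$, $|\mathfrak{S}_4\cdot(3,4,4,4)|=4$, the $\mathfrak{S}_4$-translates of $F$, of $H$, and of $G$ (which already include $\phi_{(13)}(G),\phi_{(23)}(G)$) give $12+12+4\cdot3=36$ linearly independent quintics in $I(\sigma_4(v_3(\PP^3)))_5$.

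For (ii), the core of the argument, I would apply Corollary~\ref{coeff-f-in-ker} to each representative $\beta$: a polynomial $f\in L_\beta$ lies in $K_\beta$ precisely when its coordinates $c(f;\cdot)$ in the monomial basis of $L_\beta$ satisfy the explicit linear system indexed by pairs $(\epsilon,m)$ with $\epsilon\subset\beta$, $|\epsilon|=2d=6$, $m\in M(L_{\beta-\epsilon})$, and the rank of that system gives $\dim K_\beta$. For unbalanced $\beta$ the space $L_\beta$ is small and $K_\beta=0$ is immediate; for $\beta$ with repeated entries --- in particular the three exceptional orbits --- I would first split $K_\beta$ into the smaller pieces $K_\beta[\pm\tau]$ (and $K_\beta[\pm\tau,\pm\tau']$) of Remark~\ref{split-inv-ker-im}, so that each linear system is small enough to solve by hand or with a light symbolic computation. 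Running through all cases should yield $\dim K_\beta=0$ unless $\beta$ is $\mathfrak{S}_4$-equivalent to $(2,4,4,5)$ or $(3,3,4,5)$ (then $\dim K_\beta=1$), or to $(3,4,4,4)$ (then $\dim K_\beta=3$), whence $\sum_\beta\dim K_\beta\le 12+12+4\cdot3=36$.

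Combining (i) and (ii) gives $\dim I(\sigma_4(v_3(\PP^3)))_5=36$ and the decomposition \eqref{eq:FGH-prolong-2}; since in each exceptional orbit the exhibited polynomials already span a space of the right dimension, the identities \eqref{eq:FGH-prolong} follow, and the $36$ polynomials $\phi_\tau(F),\phi_\tau(G),\phi_\tau(H)$ form a basis. I expect the genuine difficulty to be entirely the size of the case analysis in (ii): there are many partitions of $15$ into at most $4$ parts, $\dim L_\beta$ is sizeable for the balanced ones, and verifying $K_\beta=0$ while pinning down the dimensions $1,1,3$ in the surviving orbits is delicate bookkeeping even with the symmetry reductions of \textsection{}\ref{sect_prolong}.
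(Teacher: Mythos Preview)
Your overall architecture---lower bound from the explicit polynomials, upper bound from the linear system of Corollary~\ref{coeff-f-in-ker} on each weight orbit, then match---is exactly the paper's. Two points of comparison are worth noting.

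First, for the linear independence of $G,\phi_{(23)}(G),\phi_{(13)}(G)$ the paper gives a cleaner, self-contained argument than ``compare a few coefficients'': it exhibits the single monomial $f=s_{1110}^3 s_{0111} s_{0003}$, checks that $\phi_{(23)}(f)$ and $\phi_{(13)}(f)$ are not terms of $G$, and concludes by symmetry that each $\phi_\tau(G)$ is the unique one of the three containing $\phi_\tau(f)$. Your observation that the pairwise differences have no $s_{3000}$ is correct but not actually needed.

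Second, and more substantively, for the upper bound in~(ii) the paper does \emph{not} use the $K_\beta[\pm\tau]$ splitting of Remark~\ref{split-inv-ker-im}; instead Appendix~\ref{rem:coeff-36-poly} exploits Lemma~\ref{iso-outside-xi} and the isomorphism~\eqref{eq:qring_iso_local}. The point is that any $f\in K_\beta\subset I(\sigma_4(v_3(\PP^3)))$ satisfies $\tilde\rho(f)=0$ in $R_\xi$, so $f$ is completely determined once one knows the coefficients $c(f;s_\alpha\cdot h)$ for $\alpha\in\{(2,0,0,1),(2,0,1,0),(2,1,0,0),(3,0,0,0)\}$ and all $h$. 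This cuts the bookkeeping dramatically: rather than analysing all of $L_\beta$, one only has to run the step-by-step elimination of Corollary~\ref{coeff-f-in-ker} on monomials divisible by one of these four variables. Your symmetry splitting would also work in principle, but the $\tilde\rho$ trick is what makes the case analysis for the three large orbits $(2,4,4,5),(3,4,4,4),(3,3,4,5)$ genuinely short.
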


\begin{proof}
  Since $F,G,H \in I(\sigma_4(v_3(\PP^3)))$,
  we have $F \in K_{(2,4,4,5)}$,
  $G \in K_{(3,4,4,4)}$, and
  $H \in K_{(3,3,4,5)}$.
  By symmetry on $v_4(\PP^3) \subset \PP^{19}$,
  it holds that $\phi_{(23)}(G), \phi_{(13)}(G) \in K_{(3,4,4,4)}$ as well. Now, we see that $3$ polynomials $G, \phi_{(23)}(G), \phi_{(13)}(G) \in K_{(3,4,4,4)}$
  are linearly independent as follows.
  For the term $f=s_{1110}^3s_{0111}s_{0003}$ of $G$,
  we find that
  $\phi_{(23)}(f)=s_{1101}^3s_{0111}s_{0030}$ and $\phi_{(13)}(f) =s_{1011}^3s_{0111}s_{0300}$
  are not terms of $G$, and hence
  $f$ is not a term of $\phi_{(23)}(G), \phi_{(13)}(G)$.
  It follows that $G$ is the unique member of
  $\set{G, \phi_{(23)}(G), \phi_{(13)}(G)}$
  having $f$.
  By symmetry,
  $\phi_{\tau}(G)$
  is the unique member
  having $\phi_{\tau}(f)$ as its term, which implies the independence of them.

  Next, we can get the dimension $\kappa_{\beta} := \dim K_{\beta}$ for each $\beta$, for example,
  by calculating the rank of the linear map $\Psi|_{L_{\beta}}$ or 
  counting indepedent linear relations of the coefficients of a general polynomial in $L_\beta$ induced from Corollary~\ref{coeff-f-in-ker} (see Appendix \ref{rem:coeff-36-poly} for the details). As a result, $\kappa_{(2,4,4,5)} = 1, \kappa_{(3,4,4,4)} = 3, \kappa_{(3,3,4,5)} = 1$,
  and $K_{\beta} = 0$ for $\beta \notin \mathfrak{S}_{4} \set{(2,4,4,5), (3,4,4,4), (3,3,4,5)}$.
  Thus the equalities~(\ref{eq:FGH-prolong}) hold.
  
  By Proposition~\ref{prop-dsum-beta},
  we have
  the direct sum decomposition~(\ref{eq:FGH-prolong-2}).
  We count the elements of the orbit as
  $\# \mathfrak{S}_{4} (2,4,4,5) = [\mathfrak{S}_{4}: \lin{(12)}] = 24/2 = 12$,
  where the stabilizer $(\mathfrak{S}_{4})_{(2,4,4,5)} = \lin{(12)}$.
  Similarly,
  $\# \mathfrak{S}_{4} (3,4,4,4) = [\mathfrak{S}_{4}: \lin{(23), (234)}] = 4$,
  $\# \mathfrak{S}_{4} (3,3,4,5) = [\mathfrak{S}_{4}: \lin{(01)}] = 12$.
  Hence
  $\dim I(\sigma_4(v_3(\PP^3)))_{5} = 1 \times 12 + 3 \times 4 + 1 \times 12 = 36$.
\end{proof}

\subsection{Higher secant varieties of minimal degrees and next-to-minimal degrees}
\label{sec:secant-vari-minim}

Let $X \subset \PN$ be a non-degenerate projective variety, and let $e=\codim(\sigma_k(X), \PN)$.
In the sense of \cite{CR,CK},
we say $\sigma_k(X)$ is
\emph{a $k$-secant variety of minimal degree}
if $\deg \sigma_k(X) = \binom{e+k}{k}$.
The next-to-extremal case in such degrees
has been investigated in \cite{CK},
where
$\sigma_k(X)$ is said to be
\emph{a del Pezzo $k$-secant variety}
if $\deg \sigma_k(X) = \binom{e+k}{k}+\binom{e+k-1}{k-1}$
and the sectional genus
$\pi(\sigma_k(X)) = (k-1) \Bigl(\binom{e+k}{k}+\binom{e+k-1}{k-1}\Bigr) +1$.

In this setting, by \cite[corollary 2.15]{CK} and \cite[theorem~1.1, (3b)]{CK}, it holds
\begin{gather}\label{eq:dimI-min}
  \dim I(\sigma_k(X))_{k+1}
  = \beta_{1,k}(\sigma_k(X)) \leq B_{1,k}^e := \binom{e+k}{1+k},
\end{gather}
and the equality holds
if and only if $\sigma_k(X)$ is a $k$-secant variety of minimal degree.
By \cite[proposition~6.10]{CK} and \cite[theorem~1.2, (3b)]{CK},
in the case when
$\sigma_k(X)$ is not a $k$-secant variety of minimal degree,
it holds 
\begin{gather}\label{eq:dimI-dP}
  \dim I(\sigma_k(X))_{k+1} = \beta_{1,k}(\sigma_k(X)) \leq {B_{1,k}'{}^{\kern-1.5ex e}}\, := \binom{e+k}{1+k} - \binom{e+k-2}{k-1},
\end{gather}
and the equality holds
if and only if $\sigma_k(X)$ is a del Pezzo $k$-secant variety.

Now, let us study the case of $\sigma_4(v_3(\PP^3)) \subset \PP^{19}$.
Note that $I(\sigma_4(v_3(\PP^3)))_j = 0$ for $j \leq 4$ (for example, see \cite[lemma~2.13]{CK}).

\begin{proof}[Proof of Theorem~\ref{thm_m1}]
  In our setting $e=k=4$,
  we have $\binom{e+k}{k} = 70$, $\binom{e+k-1}{k-1} = 35$, and
  ${B_{1,k}'{}^{\kern-1.5ex e}} = 36$.
  From Theorem~\ref{FGH-prolong}, 
  since the equality in (\ref{eq:dimI-dP}) holds
  (i.e., the condition~(3b) of \cite[theorem~1.2]{CK} holds),
  we conclude that
  $\sigma_4(v_3(\PP^3))$ is a del Pezzo $4$-secant variety,
  where $\deg \sigma_4(v_3(\PP^3)) = 105$ and $\pi(\sigma_4(v_3(\PP^3))) = 316$.
  In addition, it satisfies property $N_{5,3}$, that is to say,
  $\beta_{i,j}(\sigma_4(v_3(\PP^3))) = 0$ for every $i \leq 3$ and every $j \geq 5$
  (see \cite[theorem~1.2(4) and definition~2.11]{CK}).
  Since $\beta_{1,j}(\sigma_4(v_3(\PP^3))) = 0$ for $j \geq 5$,
  the ideal $I(\sigma_4(v_3(\PP^3)))$
  has no minimal generator of degree larger than $5$,
  which implies that $I(\sigma_4(v_3(\PP^3)))$ is minimally generated
  by $36$ polynomials of degrees $5$. Finally, the minimal free resolution which is arithmetically Gorenstein of codimension $4$ also can be obtained by the equivalence in \cite[theorem~1.2]{CK}.
\end{proof}

\subsection{Another expression of the polynomial $G$ using minimal free resolution}
\label{sec:minim-free-resol}

By Theorem~\ref{thm_m1}, we have the minimal free resolution of $I(\sigma_4(v_3(\PP^3)))$ as follows:
\begin{gather*}
   0 \rightarrow S(-12) \xrightarrow{M_4} S(-7)^{36} \xrightarrow{M_3} S(-6)^{70} \xrightarrow{M_2} S(-5)^{36} \xrightarrow{M_1} I(\sigma_4(v_3(\PP^3))) \rightarrow 0~,
\end{gather*}
with differential matrices $M_i$'s representing homomorphisms of free modules.
Here, both of $M_1$ and $M_4$ consist of $36$ polynomials generating $I(\sigma_4(v_3(\PP^3)))_5$.
Note that $M_3$ is a $70 \times 36$ matrix of linear polynomials such that $M_3\cdot M_4=0$.
An explicit expression of these matrices of the minimal free resolution can be obtained
by computer algebra system (e.g. \cite{DGPS},\cite{SageMath},\cite{M2}).
Then, choosing suitable bases of free modules,
we have a nice expression of $M_3$ (whose submatrix is the skew symmetric matrix $A_1$)
as (\ref{eq:good-expre-M_3}) below.

\begin{definition}
  Similar to the matrix $A_1$ in Definition~\ref{def-FGH-mat-A1},
  we set $A_2$ to be the followig $10 \times 11$ matrix,
  where labels of columns are the same as $A_1$,
  and labels of rows $4446, \dots, 5544$ are new.
\begin{gather*}
  \begin{matrix}
    \ &2445&3534&3543&3453&3444(1)&3354&2544&2454&3345&3444(2)&3435
    \\
    4446&s_{2001}&0&0&0&-s_{1002}&0&0&0&0&s_{1002}&-s_{1011}
    \\
    4455(2)&-s_{2010}&0&0&0&s_{1011}&0&0&0&0&-s_{1011}&s_{1020}
    \\
    4545(2)&s_{2100}&0&0&0&-s_{1101}&0&0&0&0&s_{1101}&-s_{1110}
    \\
    5445&s_{3000}&0&0&0&-s_{2001}&0&0&0&0&s_{2001}&-s_{2010}
    \\
    5346&0&0&0&0&0&0&0&0&-s_{2001}&0&0
    \\
    5355&0&0&0&0&0&0&0&0&s_{2010}&0&0
    \\
    5445&0&0&0&0&0&0&0&0&-s_{2100}&0&0
    \\
    6345&0&0&0&0&0&0&0&0&s_{3000}&0&0
    \\
    5454&0&0&0&-s_{2001}&s_{2010}&-s_{2100}&0&-s_{3000}&0&-2 s_{2010}&0
    \\
    5544&0&-s_{2010}&-s_{2001}&0&-2 s_{2100}&0&-s_{3000}&0&0&2 s_{2100}&0
    \\
  \end{matrix}
\end{gather*}
We set $B_2$ to be the followig $10 \times 10$ matrix,
where labels of rows are the same as $A_2$,
and labels of columns $3444(3), \dots, 4344(2)$ are new.
\begin{gather*}
  \begin{matrix}
    \ &3444(3)&4434&4344(1)&4443&4245&4335&5343&5244&5334&4344(2)
    \\
    4446&s_{1002}&-s_{0012}&-s_{0102}&-s_{0003}&-s_{0201}&s_{0111}&0&0&0&0
    \\
    4455(2)&-s_{1011}&s_{0021}&s_{0111}&s_{0012}&s_{0210}&-s_{0120}&0&0&0&0
    \\
    4545(2)&s_{1101}&-s_{0111}&-s_{0201}&-s_{0102}&-s_{0300}&s_{0210}&0&0&0&0
    \\
    5445&s_{2001}&-s_{1011}&-s_{1101}&-s_{1002}&-s_{1200}&s_{1110}&0&0&0&0
    \\
    5346&0&0&0&0&s_{1101}&-s_{1011}&s_{0003}&s_{0102}&-s_{0012}&s_{1002}
    \\
    5355&0&0&0&0&-s_{1110}&s_{1020}&-s_{0012}&-s_{0111}&s_{0021}&-s_{1011}
    \\
    5445&0&0&0&0&s_{1200}&-s_{1110}&s_{0102}&s_{0201}&-s_{0111}&s_{1101}
    \\
    6345&0&0&0&0&-s_{2100}&s_{2010}&-s_{1002}&-s_{1101}&s_{1011}&-s_{2001}
    \\
    5454&-s_{2010}&s_{1020}&s_{1110}&s_{1011}&0&0&-s_{0111}&-s_{0210}&s_{0120}&-s_{1110}
    \\
    5544&s_{2100}&-s_{1110}&-s_{1200}&-s_{1101}&0&0&s_{0201}&s_{0300}&-s_{0210}&s_{1200}
    \\
  \end{matrix}
\end{gather*}
\end{definition}

By choosing bases of $S(-6)^{70}$ and $S(-7)^{36}$,
the $70\times 36$ matrix $M_3$ is expressed as
\begin{gather}\label{eq:good-expre-M_3}
  \begin{bmatrix}
    A_1 & O & O\\
    A_2 & B_2 & O \\
    * & * & *
  \end{bmatrix},
\end{gather}
where $O$ is a zero matrix of suitable size, and $*$ is a matrix of certain entries.
Then $M_3\cdot M_4 = 0$ means that,
for the column vector $v_l$ of $M_3$ with label $l$
and for the polynomial $P_l$ of $M_4$ of weight $l$,
the sum $\sum_{l} v_l P_l = 0$.
In this notation, $P_{2445} = F$, $P_{3345} = H$, and $P_{3444(3)} = G$.

\begin{proposition}\label{prop:G-from-mat}
  Let $\tilde{A}_1$ be
  the $10\times 10$ skew-symmetric matrix
  given by removing the $3435$ column and the $3564$ row from $A_1$,
  and let $c_1$
  be the column vector of size $10$ given by taking the $3435$ column of $A_1$
  and removing $3564$ row.
  Let $\tilde{A}_2$ be
  the $10\times 10$ matrix
  given by removing the $3435$ column from $A_2$,
  let $c_2$
  be the column vector of size $10$ given by taking the $3435$ column of $A_2$,
  and
  let $\tilde{B}_2$ be
  the $10\times 9$ matrix
  given by removing the $3444(3)$ column from $B_2$.
  We set a $20 \times 20$ matrix
  \begin{gather*}
    L =
    \begin{bmatrix}
      \tilde{A}_1 & c_1 & O \\
      \tilde{A}_2 & c_2 & \tilde{B}_2
    \end{bmatrix},
  \end{gather*}
  where $O$ is the $10 \times 9$ zero matrix.
  Then we have
  \begin{gather}\label{eq:G-det-pf}
    G = -\frac{\det(L)}{\Pf(\tilde{A}_1)^3}.
  \end{gather}
\end{proposition}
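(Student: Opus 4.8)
The plan is to read the identity off the syzygy relation $M_3\cdot M_4=0$ of the minimal free resolution, reorganize it into a $20\times 20$ linear system whose coefficient matrix is $L$, and then apply Cramer's rule together with the Pfaffian structure of the skew block $A_1$. Concretely, using the block shape \eqref{eq:good-expre-M_3} of $M_3$, write $\mathbf{p}=(P_l)_l$ for the sub-column of $M_4$ indexed by the $11$ column-labels of $A_1$ (so $P_{2445}=F$, and $P_{3435}$ is the generator of that weight) and $\mathbf{q}=(P_l)_l$ for the sub-column indexed by the $10$ column-labels of $B_2$ (so $P_{3444(3)}=G$). Since the top two row-blocks of $M_3$ are $[\,A_1\mid O\mid O\,]$ and $[\,A_2\mid B_2\mid O\,]$, the equation $M_3\cdot M_4=0$ restricted to those $21$ rows reads exactly $A_1\,\mathbf{p}=0$ and $A_2\,\mathbf{p}+B_2\,\mathbf{q}=0$.

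Next I would split $\mathbf{p}=(\mathbf{p}';\,P_{3435})$ (first $10$ entries, then the $3435$-entry) and $\mathbf{q}=(G;\,\mathbf{q}')$ (the $3444(3)$-entry, then the last $9$), and let $b$ be the $3444(3)$-column of $B_2$, so $B_2\mathbf{q}=bG+\tilde B_2\mathbf{q}'$. Then the first $10$ rows of $A_1\mathbf{p}=0$ are $\tilde A_1\mathbf{p}'+c_1P_{3435}=0$, and $A_2\mathbf{p}+B_2\mathbf{q}=0$ is $\tilde A_2\mathbf{p}'+c_2P_{3435}+\tilde B_2\mathbf{q}'=-bG$. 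Stacking these $20$ scalar equations is precisely
\[
  L\begin{pmatrix}\mathbf{p}'\\ P_{3435}\\ \mathbf{q}'\end{pmatrix}=\begin{pmatrix}0\\ -bG\end{pmatrix},
\]
with $L$ the matrix of the statement and with $P_{3435}$ sitting in position $11$ of the unknown vector (the position matching the column $(c_1;c_2)$ of $L$).

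Now apply the adjugate identity (Cramer's rule over $\CC[\{s_\alpha\}]$) to the $11$-th coordinate: replacing column $11$ of $L$ by the right-hand side $-G\,(0;b)$ and expanding multilinearly in that column gives
\[
  \det(L)\cdot P_{3435}\;=\;-\,G\cdot\det\!\begin{pmatrix}\tilde A_1 & O\\ \tilde A_2 & B_2\end{pmatrix}\;=\;-\,G\cdot\det(\tilde A_1)\cdot\det(B_2),
\]
since the matrix on the right is block lower triangular — its top-right $10\times 10$ block is zero because the $3444(3)$-column was removed from $\tilde B_2$ and replaced by the zero column. As $\tilde A_1$ is $10\times 10$ skew-symmetric, $\det(\tilde A_1)=\Pf(\tilde A_1)^2$, so $\det(L)\cdot P_{3435}=-\,G\,\Pf(\tilde A_1)^2\det(B_2)$. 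It remains to identify two quantities. First, $\mathbf{p}$ lies in the kernel of the $11\times 11$ skew-symmetric matrix $A_1$, which has generic corank $1$ (indeed $\det((A_1)_{\widehat 1})=F^2\neq 0$); hence $\mathbf{p}$ is a scalar multiple of the vector of signed sub-Pfaffians of $A_1$, and the normalization $P_{2445}=F=\Pf((A_1)_{\widehat 1})$ forces that scalar to be a sign, giving $P_{3435}=\varepsilon\,\Pf((A_1)_{\widehat{11}})=\varepsilon\,\Pf(\tilde A_1)$ with $\varepsilon\in\{\pm1\}$. Second, one has $\det(B_2)=\varepsilon\,\Pf(\tilde A_1)^2$. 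Substituting both into the displayed identity and cancelling the nonzero factor $\varepsilon\,\Pf(\tilde A_1)$ in the domain $\CC[\{s_\alpha\}]$ yields $\det(L)=-\,G\,\Pf(\tilde A_1)^3$, equivalently \eqref{eq:G-det-pf}.

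The step I expect to be the main obstacle is the identity $\det(B_2)=\pm\Pf(\tilde A_1)^2$: unlike the rest of the argument it is not formal, and establishing it seems to require either a direct (computer-assisted) expansion of the two explicit $10\times 10$ matrices, or exhibiting a unimodular change of basis relating $B_2$ to $\tilde A_1$ hidden in the choice of bases made for \eqref{eq:good-expre-M_3}. The secondary nuisance is sign bookkeeping — the sign $\varepsilon$, the Pfaffian sign convention, and the signs in the multilinear/cofactor expansions — but these all cancel in the final formula, so only internal consistency needs to be checked.
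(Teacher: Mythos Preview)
Your proposal is correct and follows essentially the same route as the paper. Both arguments extract from the syzygy $M_3\cdot M_4=0$ (restricted to the first $21$ rows of \eqref{eq:good-expre-M_3}) the scalar identity $\det(L)\,P_{3435}+\det(\tilde A_1)\det(B_2)\,G=0$ via an adjugate/Cramer computation, then substitute $P_{3435}=\Pf(\tilde A_1)$ and the directly computed relation $\det(B_2)=\Pf(\tilde A_1)^2$; the paper multiplies by the adjugate of $N=\bigl[\begin{smallmatrix}\tilde A_1&O\\ \tilde A_2&B_2\end{smallmatrix}\bigr]$ and reads off the $4446$-row, whereas you move the $G$-column to the right-hand side and apply Cramer to $L$, but these are the same cofactor calculation.
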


\begin{proof}
  Let $d_2$
  be the column vector of size $10$ given by taking the $3344(3)$ row of $B_2$.
  We set the following $20 \times 21$ matrix and $20 \times 20$ matrix,
  \begin{gather*}
    X =
    \begin{bmatrix}
      \tilde{A}_1 & c_1 & O & O \\
      \tilde{A}_2 & c_2 & d_2 & \tilde{B}_2
    \end{bmatrix},
    \quad
    N =
    \begin{bmatrix}
      \tilde{A}_1 & O & O \\
      \tilde{A}_2 & d_2 & \tilde{B}_2
    \end{bmatrix}
    =
    \begin{bmatrix}
      \tilde{A}_1 & O \\
      \tilde{A}_2 & B_2
    \end{bmatrix},
  \end{gather*}
  where $\det(N) = \det(\tilde{A}_1)\det(B_2)$.

  Taking
  $M_4'$ to be the 21 polynomials of $M_4$ of weight
  $2445, \dots, 3435, 3444(3), \dots, 4334(2)$,
  we have $X \cdot M_4' = 0$.
  Let $E_{n}$ be the identity matrix of size $n$,
  and let $\check{N}$ be tha adjugate matrix of $N$, i.e.,
  $\check{N}\cdot N = \det(N) \cdot E_{20}$,
  which is
  \begin{gather*}
    \begin{bmatrix}
      \check{N}
      \begin{bmatrix}
        \tilde{A}_1 \\
        \tilde{A}_2
      \end{bmatrix}
      &
        \check{N}
        \begin{bmatrix}
          O \\
          d_2
        \end{bmatrix}
      &
        \check{N}
        \begin{bmatrix}
          O \\
          \tilde{B}_2
        \end{bmatrix}
    \end{bmatrix}
    = \
    \begin{matrix}
      \ &&3444(3)&
      \\
      &\det(N) \cdot E_{10} & O & O
      \\
      4446& O & \det(N)& O
      \\
      &O&O&\det(N) \cdot E_{9}\,
    \end{matrix}
  \end{gather*}
  where $4446$ means the label of its row, and $3444(3)$ means the label of its column.
  Then $\check{N}\cdot X$ is equal to
  \begin{gather*}
    \begin{bmatrix}
      \check{N}
      \begin{bmatrix}
        \tilde{A}_1 \\
        \tilde{A}_2
      \end{bmatrix}
      &
        \check{N}
        \begin{bmatrix}
          c_1 \\
          c_2
        \end{bmatrix}
      &
        \check{N}
        \begin{bmatrix}
          O \\
          d_2
        \end{bmatrix}
      &
        \check{N}
        \begin{bmatrix}
          O \\
          \tilde{B}_2
        \end{bmatrix}
    \end{bmatrix}
    = \
    \begin{matrix}
      \ &&3435&3444(3)&
      \\
        &\det(N) \cdot E_{10} & * & O & O
      \\
      4446& O & \det(L) & \det(N)& O
      \\
        &O&*&O&\det(N) \cdot E_{9},
    \end{matrix}
  \end{gather*}
  where note that the 4446 row of the column vector
  $\check{N}\begin{bmatrix} c_1 \\ c_2 \end{bmatrix}$
  is equal to $\det(L)$, because of the definition of cofactors of the matrix.
  Since $\check{N}\cdot X \cdot M_4' = 0$,
  it follows $\det(L) P_{3435} + \det(N) P_{3444(3)} = 0$.
  As in Definition~\ref{def-FH-1}, we have $P_{3435} = \Pf(\tilde{A}_1)$.
  (It is also given as $\phi_{(12)}(H)$.)
  Then
  \begin{gather*}
    G = P_{3444(3)}
    = -\frac{\det(L) \Pf(\tilde{A}_1)}{\det(N)}
    = -\frac{\det(L)}{\Pf(\tilde{A}_1)\det(B_2)}.
  \end{gather*}
  We may compute $\det(B_2) = \Pf(\tilde{A}_1)^2$. Hence the assertion follows.
\end{proof}

\begin{remark}
  In the formula~(\ref{eq:G-det-pf}),
  the computation of the determinant of the $20 \times 20$ matrix $L$ is actually heavy.
  Then, in order to get $G$ from a matrix computation,
  it may be better to construct a smaller matrix, as follows.
  Using the inverse matrix $\tilde{A}_1^{-1}$, we have
  \begin{gather*}
    L
    \begin{bmatrix}
      \tilde{A}_1^{-1} & O\\ 
      O & E_{10}
    \end{bmatrix}
    =
    \begin{bmatrix}
      \tilde{A}_1 &c_1 &O \\
      \tilde{A}_2 &c_2 &\tilde{B}_2
    \end{bmatrix}
    \begin{bmatrix}
      \tilde{A}_1^{-1} & O\\ 
      O & E_{10}
    \end{bmatrix}
    =
    \begin{bmatrix}
      E_{10} &c_1 & O \\
      A_2\tilde{A}_1^{-1} &c_2 &\tilde{B}_2
    \end{bmatrix},
  \end{gather*}
  which is transformed to the following matrix
  \begin{gather*}
    \begin{bmatrix}
      E_{10} & O & O\\
      O &\tilde{c}_2 &\tilde{B}_2
    \end{bmatrix},
  \end{gather*}
  where $\tilde{c}_2 = c_2 - A_2A_1^{-1}c_1$.
  It follows that
  $\det(L)/\det(\tilde{A}_1) = \det \left(\begin{bmatrix} \tilde{c}_2 & \tilde{B}_2 \end{bmatrix} \right)$,
  and hence,
  \begin{gather*}
    G = -\frac{\det \left(\begin{bmatrix} \tilde{c}_2 & \tilde{B}_2 \end{bmatrix} \right)}{\Pf(\tilde{A}_1)}.
  \end{gather*}
  Since $\begin{bmatrix} \tilde{c}_2 & \tilde{B}_2 \end{bmatrix}$
  is a $10 \times 10$ matrix,
  the computation of its determinant is relatively faster,
  though entries of $\tilde{c}_2$ are fractions of polynomials.
\end{remark}

\section{More examples of $k$-secant varieties of relatively small degrees}\label{sec:some-small-degrees}

In this section,
we show that some more higher secant varieties $\sigma_k(v_d(\Pn))$
are of minimal degree or next-to-minimal degree
by calculating
the dimension of each direct summand
$K_{\beta} = \ker(\Psi|_{L_{\beta}})$ of $I(\sigma_k(v_d(\Pn)))_{k+1}$ in a similar way as in Section \ref{sect_s4(v3(P3))} using Proposition~\ref{prop-dsum-beta}.

Our procedure is as follows:
Recall that $c(f; h)$ is the coefficient of $f$ for a monomial $h \in L_{\beta}$
as in Corollary~\ref{coeff-f-in-ker}.
We want to compute how many coefficients of $f \in K_{\beta}$ can be regarded as independent parameters (or variables) to determine the other remaining coefficients.

\begin{enumerate}[\;(a)]
\item 
  In an intermediate step, assume that some coefficients $c(f; h_1), \dots, c(f; h_i)$ are regarded as variables, and there is a subset $S_j \subset M(L_{\beta})$ such that any $c(f; h)$ with $h \in S_j$ is written as a linear sum of the coefficients regarded as variables.

\item 
  If there exist an $\epsilon \subset \beta$ with $|\epsilon|=2d$ and a monomial $m \in M(L_{\beta-\epsilon})$ such that
  the left hand side of
  $\sum_{q \in M(L_{\epsilon})} \frac{1}{v(q)}\, c(f; mq) = 0$
  in Corollary~\ref{coeff-f-in-ker}
  consists of
  \emph{only one new coefficient} $c(f;h)$, variables $c(f; h_1), \dots, c(f; h_j)$, and coefficients of $f$ for monomials of $S_j$,
  then $c(f;h)$ is also written as a linear sum of $c(f; h_1), \dots, c(f; h_i)$;
  in this case, we take $S_{j+1} = \{h\} \cup S_j$ and continue the process.

\item 
  If there is no such $\epsilon \subset \beta$, then
  we choose a new coefficient $c(f; h_{i+1})$ to regard it as a variable, and continue the process.

\item 
  The process is finished when all $\epsilon \subset \beta$ with $|\epsilon|=2d$
  and all $m \in M(L_{\beta-\epsilon})$
  appear.
  We have the exact value of $\kappa_{\beta}=\dim K_{\beta}$
  as the rank of the system of linear equations of the variables.
  (It is also possible to get explicit polynomials which give basis of $K_{\beta}$
  by memorizing all the linear equations of coefficients in our process.)

\end{enumerate}

For an explicit calculation, see Example~\ref{rem:calc-844} below.
Note that we used a variant of this algorithm in Appendix~\ref{rem:coeff-36-poly}.

\subsection{Del  Pezzo $k$-secant varieties}\label{subsect_dP varietes}

\subsubsection{$\sigma_3(v_4(\PP^2)) \subset \PP^{14}$}

We study the third secant variety $\sigma_3(v_4(\PP^2)) \subset \PP^{14}$,
which is of codimension $e=6$ in $\PP^{14}$.
For $\beta \in \ZZ_{\geq 0}^3$ with $|\beta| = 16$,
we calculate the dimension of each direct summand
$K_{\beta}$ of $I(\sigma_3(v_4(\PP^2)))_4$, as follows.
We have 
$\kappa_{(6,5,5)} = \kappa_{(6,6,4)} = 7$,
$\kappa_{(7,5,4)} = 4$,
$\kappa_{(7,6,3)} = 3$,
$\kappa_{(7,7,2)} = \kappa_{(8,5,3)} = \kappa_{(8,6,2)} = 1$,
$\kappa_{(8,4,4)} = 2$, and other $K_{\beta} = 0$ up to permutations (for instance, we see the calculation $\kappa_{(8,4,4)} = 2$ in Example~\ref{rem:calc-844} below).
Since $\# \mathfrak{S}_3 \beta = [\mathfrak{S}_{3}: (\mathfrak{S}_3)_{\beta}]$
for the stabilizer $(\mathfrak{S}_3)_{\beta}$, it follows from Proposition~\ref{prop-dsum-beta} that 
\begin{gather*}
  \dim I(\sigma_3(v_4(\PP^2)))_4 = 7 \times 3 + 7 \times 3 + 4 \times 6 + 3 \times 6 + 1 \times 3 + 1 \times 6 + 1 \times 6 + 2 \times 3 = 105,
\end{gather*}
which gives the equality in (\ref{eq:dimI-dP}).
Since the condition~(3b) of \cite[theorem~1.2]{CK} holds,
we have that
$\sigma_3(v_4(\PP^2))$ is a del Pezzo $4$-secant variety,
and the prime ideal $I(\sigma_3(v_4(\PP^2)))$ is minimally generated by
these $105$ polynomials of degrees $4$.

Note that it was previously known in \cite{Sch} and \cite[theorem~3.2.1(1)]{LO} that
$\sigma_3(v_4(\PP^2))$ can be defined by $4$-minors of symmetric flattening matrices $\phi_{1,3}$ and $\phi_{2,2}$ only \textit{scheme-theoretically}.

\begin{example}\label{rem:calc-844}
  Let us calculate $\kappa_{(8,4,4)} = 2$. Note that $\dim L_{(8,4,4)}=51$, which can be obtained as regarding nonnegative solutions of an integer matrix system.
  We first get that $c(f; h) = 0$ for $11+8+2=21$ monomials $h \in M(L_{(8,4,4)})$
  in the first $3$~steps.
  \medskip

  \noindent{}\emph{Step 1}
  ($11$ monomials):
  \begin{multline*}
    s_{400}^{2} s_{040} s_{004},\
    s_{400}^{2} s_{031} s_{013},\
    s_{400}^{2} s_{022}^{2},\
    s_{400} s_{310} s_{130} s_{004},\
    s_{400} s_{310} s_{121} s_{013},
    s_{400} s_{310} s_{112} s_{022},\\
    s_{400} s_{310} s_{103} s_{031},\
    s_{400} s_{301} s_{130} s_{013},\
    s_{400} s_{301} s_{121} s_{022},\
    s_{400} s_{301} s_{112} s_{031},\
    s_{400} s_{301} s_{103} s_{040},\
  \end{multline*}
  $\epsilon = (8, 0, 0), (8, 0, 0), (8, 0, 0), (7, 1, 0), (7, 1, 0), (7, 1, 0)$,
  $(7, 1, 0), (7, 0, 1), (7, 0, 1)$, $(7, 0, 1), (7, 0, 1)$.

  For example, 
  we have 
  $\frac{1}{2}c(f; s_{400}^{2} s_{040} s_{004}) = 0$,
  $\frac{1}{2}c(f; s_{400}^{2} s_{031} s_{013}) = 0$, and
  $\frac{1}{2}c(f; s_{400}^{2} s_{022}^{2}) = 0$
  by using Corollary~\ref{coeff-f-in-ker} with $\epsilon = (8, 0, 0)$
  and $M(L_{(8,0,0)}) = \set{s_{400}^{2}}$.
  \medskip

  \noindent{}\emph{Step 2}
  ($8$ monomials):
  \begin{multline*}
    s_{310}^{2} s_{220} s_{004},\
    s_{310} s_{301} s_{220} s_{013},\
    s_{400} s_{220}^{2} s_{004},\
    s_{400} s_{220} s_{211} s_{013},\\
    s_{310} s_{301} s_{202} s_{031},\
    s_{301}^{2} s_{202} s_{040},\
    s_{400} s_{211} s_{202} s_{031},\
    s_{400} s_{202}^{2} s_{040},\
  \end{multline*}
  $\epsilon = (5, 3, 0), (5, 3, 0), (4, 4, 0), (4, 3, 1), (5, 0, 3), (5, 0, 3), (4, 1, 3), (4, 0, 4)$.

  Here in Step~2, using monomials which were already given in Step 1,
  we find monomials $h$ satisfying $c(f; h) = 0$.
  For example, 
  we have
  $c(f; s_{310}^{2} s_{220} s_{004}) + c(f; s_{400} s_{310} s_{130} s_{004}) = 0$
  because of $M(L_{(5, 3, 0)}) = \set{s_{310} s_{220}, s_{400} s_{130}}$;
  then, by Step 1, it holds
  $c(f; s_{310}^{2} s_{220} s_{004}) = -c(f; s_{400} s_{310} s_{130} s_{004}) = 0$.
  In the same way,
  we also have
  $c(f; s_{310} s_{301} s_{220} s_{013}) = - c(f; s_{400} s_{301} s_{130} s_{013}) = 0$.
  \medskip

  Next, in Step~3, we run the same process using monomials given in Step 1 and Step 2.
  \medskip

  \noindent{}\emph{Step 3}
  ($2$ monomials):
  $s_{310}^{2} s_{211} s_{013},\
  s_{301}^{2} s_{211} s_{031},\
  $
  $\epsilon = (6, 2, 0), (6, 0, 2)$.\medskip

  After Step 3, even if we use the above new two monomials,
  no more linear relation can be obtained such that some coefficients are zero.
  Then, we specify a coefficient as a variable.
  \medskip

  \noindent{}\emph{Step 4}:
  Add $c(f; s_{310}^{2} s_{202} s_{022})$ as a variable.
  \medskip

  Now we determine $c(f; h)$ for $4$ monomials $h \in M(L_{(8,4,4)})$
  in Step 5 and Step 6.
  \medskip

  \noindent{}\emph{Step 5}
  ($2$ monomials):
  $s_{400} s_{220} s_{202} s_{022},\
  s_{310} s_{301} s_{211} s_{022},\
  $
  $\epsilon = (6, 2, 0), (5, 1, 2)$.

  Here we can use monomials given in Steps 1, 2, 3, and 4.
  For example, 
  we have
  $\frac{1}{2}c(f; s_{310}^{2} s_{202} s_{022}) + c(f; s_{400} s_{220} s_{202} s_{022}) = 0$
  because of $M(L_{(6, 2, 0)}) = \set{s_{310}^2, s_{400} s_{220}}$;
  then $c(f; s_{400} s_{220} s_{202} s_{022})$ is determined as $-\frac{1}{2}c(f; s_{310}^{2} s_{202} s_{022})$.

  In each steps below, we can use monomials given in earlier steps in this way.
  \medskip

  \noindent{}\emph{Step 6}
  ($2$ monomials):
  $s_{400} s_{211}^{2} s_{022},\
  s_{301}^{2} s_{220} s_{022},\
  $
  $\epsilon = (6, 1, 1), (5, 2, 1)$.\medskip

  Similar to Step 4,
  since we cannot determine some coefficients anymore, we specify a coefficient as a variable.
  \medskip

  \noindent{}\emph{Step 7}:
  Add $c(f; s_{310}^{2} s_{121} s_{103})$ as a variable.
  \medskip

  We determine $c(f; h)$ for the remaining monomials $h \in M(L_{(8,4,4)})$
  from Step 8 to Step 14.
  \medskip

  \noindent{}\emph{Step 8}
  ($2$ monomials):
  $s_{400} s_{220} s_{121} s_{103},\
  s_{310}^{2} s_{112}^{2},\
  $
  $\epsilon = (6, 2, 0), (2, 2, 4)$.\medskip

  \noindent{}\emph{Step 9}
  ($3$ monomials):
  \begin{gather*}
    s_{400} s_{220} s_{112}^{2},\
    s_{400} s_{211} s_{130} s_{103},\
    s_{301} s_{220} s_{202} s_{121},\
  \end{gather*}
  $\epsilon = (6, 2, 0), (3, 4, 1), (5, 0, 3)$.\medskip

  \noindent{}\emph{Step 10}
  ($5$ monomials):
  \begin{gather*}
    s_{310} s_{220} s_{211} s_{103},\
    s_{310} s_{301} s_{130} s_{103},\
    s_{301} s_{211} s_{202} s_{130},\
    s_{400} s_{211} s_{121} s_{112},\
    s_{400} s_{202} s_{130} s_{112},\
  \end{gather*}
  $\epsilon = (5, 3, 0), (6, 1, 1), (3, 4, 1), (2, 3, 3), (3, 1, 4)$.\medskip

  \noindent{}\emph{Step 11}
  ($6$ monomials):
  \begin{gather*}
    s_{310} s_{220} s_{202} s_{112},\
    s_{301} s_{220}^{2} s_{103},\
    s_{310} s_{301} s_{121} s_{112},\
    s_{301}^{2} s_{130} s_{112},\
    s_{310} s_{202}^{2} s_{130},\
    s_{400} s_{202} s_{121}^{2},\
  \end{gather*}
  $\epsilon = (5, 3, 0), (4, 4, 0), (6, 1, 1), (6, 0, 2), (5, 1, 2), (2, 4, 2)$.\medskip

  \noindent{}\emph{Step 12}
  ($5$ monomials):
  \begin{gather*}
    s_{220}^{2} s_{202}^{2},\
    s_{310} s_{211} s_{202} s_{121},\
    s_{301} s_{220} s_{211} s_{112},\
    s_{301}^{2} s_{121}^{2},\
    s_{310} s_{211}^{2} s_{112},\
  \end{gather*}
  $\epsilon = (4, 4, 0), (5, 2, 1), (4, 3, 1), (6, 0, 2), (4, 2, 2)$.\medskip

  \noindent{}\emph{Step 13}
  ($2$ monomials):
  $s_{220} s_{211}^{2} s_{202},\
  s_{301} s_{211}^{2} s_{121},\
  $
  $\epsilon = (4, 3, 1), (5, 1, 2)$.\medskip

  \noindent{}\emph{Step 14}
  ($1$ monomials):
  $s_{211}^{4},\
  $
  $\epsilon = (4, 2, 2)$.\medskip


  As a result, all the coefficients of $f$ are written as linear sums of $2$ variables $c(f; s_{310}^{2} s_{202} s_{022})$ and $c(f; s_{310}^{2} s_{121} s_{103})$.
  In this case, we may check that the $2$ variables have no linear relation.
  Hence $\kappa_{(8,4,4)} = 2$.
\end{example}

\subsubsection{$\sigma_6(v_5(\PP^2)) \subset \PP^{20}$}

The sixth secant variety $\sigma_6(v_5(\PP^2)) \subset \PP^{20}$
is of codimension $e=3$.
For $\beta \in \ZZ_{\geq 0}^3$ with $|\beta| = 35$, 
we calculate the dimension of each direct summand as follows:
$\kappa_{(12,12,11)} = 2$,
$\kappa_{(13,11,11)} = \kappa_{(13,12,10)} = 1$, and other $\kappa_{\beta} = 0$ up to permutations.
Then
\begin{gather*}
  \dim I(\sigma_6(v_5(\PP^2)))_7 = 2 \times 3 + 1 \times 3 + 1 \times 6 = 15.
\end{gather*}
Since the equality holds in (\ref{eq:dimI-dP}),
we prove that
$\sigma_6(v_5(\PP^2))$ is a del Pezzo $6$-secant variety,
whose homogeneous prime ideal is minimally generated by these $15$ polynomials of degrees $7$.

Note that it was previously known that
$\sigma_6(v_5(\PP^2))$ can be scheme-theoretically defined by $14 \times 14$-subPfaffians of a certain matrix (see \cite[theorem~4.2.7]{LO}).

\subsection{$k$-secant varieties of minimal degrees}\label{subsect_k-VMD}

\subsubsection{$\sigma_4(v_4(\PP^2)) \subset \PP^{14}$}

By \cite{ElS}, $\sigma_4(v_4(\PP^2)) \subset \PP^{14}$ is of degree $35$.
Hence it is a $4$-secant variety of minimal degree,
and $\dim I(\sigma_4(v_4(\PP^2)))_5 = 21$ as in (\ref{eq:dimI-min}).
Here, we just try to check it using our method.
For $\beta \in \ZZ_{\geq 0}^3$ with $|\beta| = 20$,
we have $\kappa_{(8,8,4)} = \kappa_{(8,7,5)} = 1$ and
$\kappa_{(8,6,6)} = \kappa_{(7,7,6)} = 2$ and other $K_{\beta} = 0$ up to permutations.
It implies
$\dim I(\sigma_4(v_4(\PP^2)))_5 = 1 \times 3 + 1 \times 6 + 2 \times 3 + 2  \times 3 = 21$.

\subsubsection{$\sigma_8(v_4(\PP^3)) \subset \PP^{34}$}



The eighth secant variety $\sigma_8(v_4(\PP^3)) \subset \PP^{34}$ is of codimension $e=3$.
Since $B_{1,8}^3 = 55$, we have
$\dim I(\sigma_8(v_4(\PP^3)))_9 \leq 55$ in (\ref{eq:dimI-min}).
In order to show that the equality holds,
we calculate the dimension of $K_{\beta} \subset I(\sigma_8(v_4(\PP^3)))_9$
for $\beta \in \ZZ_{\geq 0}^{4}$ with $|\beta| = 36$.

Let us consider $\beta = (9,9,9,9)$ and set $K = K_{(9,9,9,9)}$.
By symmetry on $\PP^3$, it holds $\phi_{(01)}(K) = K$ for $(01) \in \mathfrak{S}_4$.
Then, as in Remark~\ref{split-inv-ker-im},
taking $\lambda_{(01)}(f) = f + \phi_{(01)}(f)$,
we have $K \simeq K[(01)] \oplus K[-(01)]$.
On the other hands, 
$K[(01)]$ and $K[-(01)]$
are invariant over $\phi_{(23)}$.
Hence
$K[(01)] \simeq K[(01), (23)] \oplus K[(01), -(23)]$
and
$K[-(01)] \simeq K[-(01), (23)] \oplus K[-(01), -(23)]$.
As a result, $K_{(9,9,9,9)}$ is decomposed as
\begin{gather*}
  K_{(9,9,9,9)}[(01), (23)] \oplus K_{(9,9,9,9)}[-(01), -(23)]
  \oplus K_{(9,9,9,9)}[-(01), (23)] \oplus K_{(9,9,9,9)}[(01), -(23)].
\end{gather*}
(Note that $K_{(9,9,9,9)}[-(01), (23)] \simeq K_{(9,9,9,9)}[(01), -(23)]$.)

By computation, 
$\kappa_{(9,9,9,9)}[(01), (23)] = 2$ and $\kappa_{(9,9,9,9)}[-(01), -(23)] = 1$;
hence $\kappa_{(9,9,9,9)} \geq 3$.
In the same way, we have
$\kappa_{(10,9,9,8)} \geq \kappa_{(10,9,9,8)}[(12)] = 2$,
$\kappa_{(10,10,8,8)} \geq  \kappa_{(10,10,8,8)}[(01), (23)] = 2$,
$\kappa_{(10,10,9,7)} \geq \kappa_{(10,10,9,7)}[(01)] = 1$, and
$\kappa_{(10, 10, 10,6)} \geq \kappa_{(10, 10, 10,6)}[(01)] = 1$.
Then
$\dim I(\sigma_8(v_4(\PP^3)))_9 \geq 3 + 2 \times 12 + 2 \times 6 + 1 \times 12 + 1 \times 4 = 55$.
Hence the equality holds,
and it also implies that the equality holds for every intermediate formula.

As a result, $\sigma_8(v_4(\PP^3))$ is a $8$-secant variety of minimal degree
$\binom{11}{8} = 165$, whose homogeneous prime ideal is minimally generated by
these $55$ polynomials of degrees $9$.


\subsection{Other higher secant varieties}\label{sec:other-higher-secant}

In the case $(k,d,n)$ is one of
\[
  (2,4,2), (5,5,2), (8,6,2), (3,3,3), (7,4,3), (6,3,4), (9,3,5),
\]
computing the dimension of $I(\sigma_k(v_d(\PP^n)))_{k+1}$
and showing that $\dim I(\sigma_k(v_d(\PP^n)))_{k+1}$ is strictly less than the bound in (\ref{eq:dimI-dP}),
we can prove that the $\sigma_k(v_d(\PP^n))$ is 
\emph{neither} a $k$-secant variety of minimal degree \emph{nor} a $k$-del Pezzo. For the case of $(k,d,n) = (13,4,4)$, even though it is feasible in principle, so far we do not know the answer yet, because of lack of memory in our computer.

In following subsections, we present some details of computations for
$(k,d,n) = (7,4,3), (9,3,5)$ as examples.

\subsubsection{$\sigma_7(v_4(\PP^3)) \subset \PP^{34}$}





The seventh secant variety $\sigma_7(v_4(\PP^3)) \subset \PP^{34}$
is of codimension $e=7$.
Consider $K_{\beta} \subset I(\sigma_7(v_4(\PP^3)))_8$
with $\beta \in \ZZ_{\geq 0}^{4}$ and $|\beta| = 32$.
We compute
$\kappa_{(8,8,8,8)} = \kappa_{(8,8,8,8)}[(01), (23)] + \kappa_{(8,8,8,8)}[(01), -(23)] + \kappa_{(8,8,8,8)}[-(01), (23)] + \kappa_{(8,8,8,8)}[-(01), -(23)] = 12+4+4+6 = 26$. Similarly,
$\kappa_{(9, 8, 8, 7)} = 17$,
$\kappa_{(9, 9, 7, 7)} = 12$,
$\kappa_{(9, 9, 8, 6)} = 10$,
$\kappa_{(9, 9, 9, 5)} = 5$,
$\kappa_{(10, 7, 7, 8)} = \kappa_{(10, 8, 8, 6)} = 7$,
$\kappa_{(10, 10, 6, 6)} = 2$,
$\kappa_{(10, 9, 9, 4)} = \kappa_{(10, 10, 7, 5)} = \kappa_{(10, 10, 8, 4)} = 1$,
$\kappa_{(10, 9, 7, 6)} = 4$,
$\kappa_{(10, 9, 8, 5)} = 3$, and other $K_{\beta} = 0$ up to permutations.
As a result,
$\dim I(\sigma_7(v_4(\PP^3)))_{8} = 26 \times 1 + 17 \times 12 + 12 \times 6 + 10 \times 12 + 5 \times 4 + 7 \times 12 + 7 \times 12 + 2 \times 6 + 1 \times 12 + 1 \times  12 + 1 \times 12 + 4 \times 24 + 3 \times 24 = 826$.
Since ${B_{1,7}'{}^{\kern-1.5ex 7}} = 2079$, the equality does not hold in (\ref{eq:dimI-dP}).

\subsubsection{$\sigma_9(v_3(\PP^5)) \subset \PP^{55}$}


We consider the ninth secant variety $\sigma_9(v_3(\PP^5)) \subset \PP^{55}$,
which is of codimension $e=2$.
Since $B_{1,9}^2 = 11$, we have
$\dim I(\sigma_9(v_3(\PP^5)))_{10} \leq 11$ in (\ref{eq:dimI-min}).
By computation,
\begin{gather*}
  \kappa_{(5,5,5,5,5,5)} = \kappa_{(5,5,5,5,5,5)}[-(01), -(23), -(45)] = 1.
\end{gather*}
On the other hands, $\dim K_{\beta} = 0$ for $|\beta| = 30$ and $\beta \neq (5,5,5,5,5,5)$
as follows.
First,
except when
$\beta$ is one of
$
(30, 0, 0, 0, 0, 0),
(25, 1, 1, 1, 1, 1),
(20, 2, 2, 2, 2, 2),
(15, 3, 3, 3, 3, 3),
(10, 4, 4, 4, 4, 4),
(6, 6, 6, 6, 6, 0)
$
up to permutations,
it holds that
$\dim K_{\beta} = 0$ since $\#\mathfrak{S}_6\beta \geq 11$
(For example, $\#\mathfrak{S}_6\beta = 6!/4! = 30$ for $\beta = (4, 5, 5, 5, 5, 6)$.)
For each exceptional $\beta$, we have $\#\mathfrak{S}_6\beta =6$;
then it is also impossible to be $\dim K_{\beta} > 0$
since $\dim I(\sigma_9(v_3(\PP^5)))_{10} \leq 11$.

As a result, we have
$\dim I(\sigma_9(v_3(\PP^5)))_{10} = 1 < {B_{1,9}'{}^{\kern-1.5ex 2}} = 2$
in (\ref{eq:dimI-dP}).


\bigskip

Together with the main result in this paper, we summarize generators of the ideal of $\sigma_k(v_3(\P^n))$ for any $k\le4$ in the following Table~\ref{table:ideals-k-secants}.

\begin{table}[hbt!]
  \begin{tabular}{llll}
    \hline\hline
    \(k\)  & \(n\) & Ideal & Reference\\
    \hline\hline
    \( 1\) & $\ge1$ & generated by $2$-minors of $\phi_{1,2}$ & well-known (e.g. see \cite{Kan})\\
    \hline
    \( 2\) & $\ge2$ & generated by $3$-minors of $\phi_{1,2}$ &  \cite{Kan}\\
    \hline
    \( 3\) & $2$ & Aronhold equation & \cite{IK}\\
    \hline
           & $\ge3$ & Aronhold $+$ $4$-minors of $\phi_{1,2}$ & by  symmetric inheritance (e.g. \cite[proposition 2.3.1]{LO})\\
    \hline
      \( 4\) & $3$ & the 36 quintic equations & Theorem~\ref{thm_m1}\\
    \hline
                & $\ge4$ & the 36 quintics $+$ $5$-minors of $\phi_{1,2}$ & by symmetric inheritance\\
    \hline
  \end{tabular}
  \caption{Defining ideal $I(\sigma_k(v_3(\P^n)))$ up to $k=4$ with $\sigma_k(v_3(\P^n))\not=\P^{{n+3\choose 3}-1}$}
  \label{table:ideals-k-secants}
\end{table}

\newpage
\renewcommand{\MR}[1]{}

\newpage

\appendix

\section{Computing $\dim K_\beta$ in $I(\sigma_4(v_3(\PP^3)))_5$}\label{rem:coeff-36-poly}

Under the setting of the proof of Theorem~\ref{FGH-prolong} in \textsection{}\ref{sub_sect_FGH},
  for $\kappa_{\beta} = \dim K_{\beta}$,
  we discuss how to check
  $\kappa_{(2,4,4,5)}, \kappa_{(3,3,4,5)} \leq 1$, $\kappa_{(3,4,4,4)} \leq 3$,
  and $K_{\beta} = 0$ for $\beta \notin \mathfrak{S}_{4} \set{(2,4,4,5), (3,4,4,4), (3,3,4,5)}$.
  We also see
  $K^{(3)}_{(0,4,4,4)} = \CC\xi$.
  \medskip

  \begin{inparaenum}[(a)]
  \item 
    (a-1) Corollary~\ref{coeff-f-in-ker}
    provides an information about coefficients of polynomials of $I(\sigma_4(v_3(\PP^3)))_{5}$.
    For instance, for a generator $F$ of $I(\sigma_4(v_3(\PP^3)))_{5}$ given in
    Definition~\ref{def-FH-1},
    we consider
    the coefficient $c(F; s_{\alpha_1}^{i_1} \dotsm\, s_{\alpha_{\mu}}^{i_{\mu}})$
    with respect to 
    $\frac{1}{i_1 ! \dotsm i_{\mu}!}\, s_{\alpha_1}^{i_1} \dotsm\, s_{\alpha_{\mu}}^{i_{\mu}}$
    in the case of $\beta=(2,4,4,5)$.
    For $\epsilon = (0,0,1,5)$,
    we have $M(L_{(0,0,1,5)}) = \set{s_{0012}s_{0003}}$.
    Then $c(F; m \cdot s_{0012}s_{0003}) = 0$ for any $m \in ML_{(2,4,3,0)}$. 
    Similarly, $c(F; m \cdot s_{0102}s_{0003}) = 0$ for any $m \in M(L_{(2,3,4,0)})$,
    and $c(F; m \cdot s_{1002}s_{0003}) = 0$ for any $m \in ML_{(1,4,4,0)}$. 
    For $\epsilon = (0,0,2,4)$, we have $M(L_{(0,0,2,4)}) = \set{s_{0021}s_{0003}, s_{0012}^2}$.
    Then $c(F; m \cdot s_{0021}s_{0003}) = - \frac{1}{2} c(F; m \cdot s_{0012}^2)$
    for any $m \in M(L_{(2,4,2,1)})$.
    \medskip

    (a-2) Moreover, using Corollary~\ref{coeff-f-in-ker},
    we may get an upper bound of $\dim K_{\beta}$
    for each direct summand $K_{\beta}$ of $I(\sigma_4(v_3(\PP^3)))_{5}$.
    We set $\tilde\rho: \CC[\set{s_\alpha}] \rightarrow R_{\xi}$ to be the natural homomorphism induced by the ring isomorphism $\rho$ in (\ref{eq:qring_iso_local}).
    Then $\tilde\rho(s_{\alpha})$ is obtained by (\ref{eq:qring_iso_local__s})
    for $\alpha = (2,0,0,1), (2,0,1,0), (2,1,0,0), (3,0,0,0)$,
    and $\tilde\rho(s_{\alpha}) = s_{\alpha}$ otherwise.
    Let $f \in K_{\beta}$.
    Since $f \in I(\sigma_4(v_3(\PP^3)))$, we have $\tilde\rho(f) = 0$.
    It means that,
    if the coefficient $c(f; s_{\alpha} \cdot h)$ is determined (resp., is zero) for any $h \in M(L_{\beta-\alpha})$ and $\alpha = (2,0,0,1), (2,0,1,0), (2,1,0,0), (3,0,0,0)$,
    then $f$ itself is determined (resp., is zero).
    \medskip

  \item
    We may check $K_{\beta} = 0$ if
    $\beta \notin \mathfrak{S}_{4} \set{(2,4,4,5), (4,3,3,5), (3,4,4,4)}$.
    For example, let us take $\beta = (2,3,5,5)$ and $f \in K_{(2,3,5,5)}$.
    In this case, we do not need to consider $\alpha = (3,0,0,0)$,
    because of $\alpha \not\subset \beta$.
    \medskip

    (b-1)
    For $\alpha = (2,1,0,0)$, we have $c(f; s_{2100} \cdot h) = 0$
    for all $16$ monomials $h \in M(L_{(0,2,5,5)})$
    in the following $3$ steps.
    \medskip

    \noindent{}\emph{Step 1}. The coefficient $c(f; s_{2100} \cdot h) = 0$ if $h$ is one of the $5$ monomials:
    \begin{gather*}
      s_{0120} s_{0102} s_{0030} s_{0003},\
      s_{0201} s_{0030} s_{0021} s_{0003},\
      s_{0102}^{2} s_{0030} s_{0021},\
      s_{0210} s_{0030} s_{0012} s_{0003},\
      s_{0120}^{2} s_{0012} s_{0003},
    \end{gather*}
    where we use $\epsilon = (0, 1, 5, 0), (0, 0, 5, 1), (0, 0, 5, 1), (0, 0, 1, 5), (0, 0, 1, 5)$,
    respectively.
    
    For example, 
    applying Corollary~\ref{coeff-f-in-ker} to $\epsilon = (0, 1, 5, 0)$
    and $M(L_{(0,1,5,0)}) = \set{s_{0120}s_{0030}}$,
    we have $m=s_{2100}s_{0102}s_{0003}$ so that $c(f; s_{2100} s_{0120} s_{0102} s_{0030} s_{0003}) = 0$.
    In the same way, if $\epsilon = (0, 0, 5, 1)$, $M(L_{(0, 0, 5, 1)}) = \set{s_{0030} s_{0021}}$ and $m=s_{2100}s_{0201}s_{0003}, s_{2100} s_{0102}^{2}$,
    we have $c(f; s_{2100} s_{0201} s_{0030} s_{0021} s_{0003}) = 0$
    and $c(f; s_{2100} s_{0102}^{2} s_{0030} s_{0021}) = 0$.
    \medskip

    \noindent{}\emph{Step 2}. The coefficient $c(f; s_{2100} \cdot h) = 0$
    if $h$ is one of the $8$ monomials:
    \begin{multline*}
      s_{0111}^{2} s_{0030} s_{0003},\
      s_{0120} s_{0111} s_{0021} s_{0003},\
      s_{0210} s_{0021}^{2} s_{0003},\
      s_{0111} s_{0102} s_{0030} s_{0012},\\
      s_{0210} s_{0021} s_{0012}^{2},\
      s_{0201} s_{0021}^{2} s_{0012},\
      s_{0120} s_{0102} s_{0021} s_{0012},\
      s_{0201} s_{0030} s_{0012}^{2},
    \end{multline*}
    where we use
    $\epsilon = (0, 2, 2, 2), (0, 1, 3, 2), (0, 0, 4, 2), (0, 1, 2, 3), (0, 0, 3, 3), (0, 0, 3, 3), (0, 0, 3, 3), (0, 0, 2, 4)$, respectively.

    Here in Step~2, using monomials given in Step 1,
    we find monomials $h$ satisfying $c(f; s_{2100} \cdot h) = 0$.
    For example, for $\epsilon=(0, 2, 2, 2)$ and
    $M(L_{(0, 2, 2, 2)})
    = \set{s_{0210}s_{0012}, s_{0201}s_{0021}, s_{0120}s_{0102}, s_{0111}^2}$,
    we have
    \begin{multline*}
      c(f; s_{2100} s_{0210} s_{0030} s_{0012} s_{0003})
      + c(f; s_{2100} s_{0201} s_{0030} s_{0021} s_{0003}) + \\
      c(f; s_{2100} s_{0120} s_{0102} s_{0030} s_{0003})
      + \frac{1}{2} c(f; s_{2100} s_{0111}^{2} s_{0030} s_{0003}) = 0
    \end{multline*}
    Then, Step 1 implies $c(f; s_{2100} s_{0111}^{2} s_{0030} s_{0003}) = 0$.
    In each steps below, we can use monomials given in earlier steps in this way.
    \medskip

    \noindent{}\emph{Step 3}. The coefficient $c(f; s_{2100} \cdot h) = 0$
    if $h$ is one of the $3$ monomials:
    \begin{gather*}
      s_{0120} s_{0111} s_{0012}^{2},\
      s_{0111}^{2} s_{0021} s_{0012},\
      s_{0111} s_{0102} s_{0021}^{2},
    \end{gather*}
    where we use
    $\epsilon = (0, 2, 3, 1), (0, 2, 2, 2), (0, 1, 3, 2)$, respectively.
    For example, for $\epsilon = (0, 2, 3, 1)$ and
    $M(L_{(0, 2, 3, 1)}) = \set{s_{0210} s_{0021}, s_{0120} s_{0111}, s_{0201} s_{0030}}$ , we have
    \begin{gather*}
      c(f; s_{2100} s_{0210} s_{0021} s_{0012}^{2}) + c(f; s_{2100} s_{0120} s_{0111} s_{0012}^{2}) + c(f; s_{2100} s_{0201} s_{0030} s_{0012}^{2}) = 0.
    \end{gather*}
    Then Step 2 implies $c(f; s_{2100} s_{0120} s_{0111} s_{0012}^{2}) = 0$.
    \medskip

    (b-2)
    Next, in the same way,
    in the case of $\alpha=(2,0,1,0)$, we have $c(f; s_{2010} \cdot h) = 0$
    for all $h \in M(L_{(0,3,4,5)})$, consisting of the following $21$ monomials in $4$ steps.
    \medskip

    \noindent{}\emph{Step 1} ($4$ monomials):
    \begin{gather*}
      s_{0210} s_{0102} s_{0030} s_{0003},\
      s_{0120}^{2} s_{0102} s_{0003},\
      s_{0300} s_{0030} s_{0012} s_{0003},\
      s_{0210} s_{0120} s_{0012} s_{0003},
    \end{gather*}
    $\epsilon = (0, 1, 0, 5), (0, 1, 0, 5), (0, 0, 1, 5), (0, 0, 1, 5)$.
    \medskip

    \noindent{}\emph{Step 2} ($6$ monomials):
    \begin{multline*}
      s_{0201} s_{0111} s_{0030} s_{0003},\
      s_{0210} s_{0111} s_{0021} s_{0003},\
      s_{0300} s_{0021}^{2} s_{0003},\\
      s_{0201} s_{0102} s_{0030} s_{0012},\
      s_{0300} s_{0021} s_{0012}^{2},\
      s_{0210} s_{0102} s_{0021} s_{0012},
    \end{multline*}
    $\epsilon = (0, 3, 1, 2), (0, 1, 3, 2), (0, 0, 4, 2), (0, 3, 0, 3), (0, 0, 3, 3), (0, 0, 3, 3)$.
    \medskip

    \noindent{}\emph{Step 3} ($7$ monomials):
    \begin{multline*}
      s_{0201} s_{0120} s_{0021} s_{0003},\
      s_{0120} s_{0111}^{2} s_{0003},\
      s_{0120} s_{0111} s_{0102} s_{0012},\\
      s_{0201} s_{0111} s_{0021} s_{0012},\
      s_{0201} s_{0102} s_{0021}^{2},\
      s_{0111} s_{0102}^{2} s_{0030},\
      s_{0210} s_{0111} s_{0012}^{2},
    \end{multline*}
    $\epsilon = (0, 3, 2, 1), (0, 2, 3, 1), (0, 2, 3, 1), (0, 3, 1, 2), (0, 0, 4, 2), (0, 2, 1, 3), (0, 1, 2, 3)$.
    \medskip

    \noindent{}\emph{Step 4} ($4$ monomials):
    \begin{gather*}
      s_{0201} s_{0120} s_{0012}^{2},\
      s_{0120} s_{0102}^{2} s_{0021},\
      s_{0111}^{3} s_{0012},\
      s_{0111}^{2} s_{0102} s_{0021},
    \end{gather*}
    $\epsilon = (0, 3, 2, 1), (0, 1, 4, 1), (0, 2, 2, 2), (0, 1, 3, 2)$.
    \medskip

    (b-3) By symmetry, it also holds that
    $c(f; s_{2001} \cdot h) = 0$ for all $h \in M(L_{(0,3,5,4)})$.
    As a summary of (b-1), (b-2), and (b-3), from the discussion in (a-2),
    the polynomial $f = 0$; therefore $K_{(2,3,5,5)} = 0$.
    \medskip

  \item
    We show $\kappa_{(2,4,4,5)} \leq 1$.
    (c-1)
    For $\alpha = (2,1,0,0)$,
    We have $c(f; s_{2100} \cdot h) = 0$
    for all $h \in M(L_{(0,3,4,5)})$, consisting of the $21$ monomials in $4$ steps,
    which is the same as (b-2).
    By symmetry, for $\alpha = (2,0,1,0)$, it holds $c(f; s_{2010} \cdot h) = 0$
    for all $h \in M(L_{(0,4,3,5)})$.
    \medskip

    (c-2) We consider the case of $\alpha = (2,0,0,1)$.
    If we regard $c(f; s_{2001} s_{0300} s_{0111} s_{0030} s_{0003})$ as a variable,
    then
    $c(f; s_{2001} \cdot h)$ is determined
    for any $h \in M(L_{(0,4,4,4)})$,
    consisting of the following $24$ monomials (and $s_{0300} s_{0111} s_{0030} s_{0003}$)
    in $5$~steps.
    \medskip

    \noindent{}\emph{Step 1}
    ($6$ monomials):
    \begin{multline*}
      s_{0210} s_{0120} s_{0111} s_{0003},\
      s_{0210} s_{0201} s_{0030} s_{0003},\
      s_{0300} s_{0120} s_{0021} s_{0003},\\
      s_{0201} s_{0111} s_{0102} s_{0030},\
      s_{0300} s_{0111} s_{0021} s_{0012},\
      s_{0300} s_{0102} s_{0030} s_{0012},\
    \end{multline*}
    $\epsilon = (0, 3, 3, 0), (0, 4, 1, 1), (0, 1, 4, 1), (0, 3, 0, 3), (0, 0, 3, 3), (0, 1, 1, 4)$.
    \medskip

    For example,
    for $\epsilon = (0, 3, 3, 0)$ and
    $M(L_{(0,3,3,0)}) = \set{s_{0210} s_{0120}, s_{0300} s_{0030}}$,
    we have
    \begin{gather*}
      c(f; s_{2001} s_{0210} s_{0120} s_{0111} s_{0003}) +
      c(f; s_{2001} s_{0300} s_{0111} s_{0030} s_{0003}) = 0.
    \end{gather*}
    Since $c(f; s_{2001} s_{0300} s_{0111} s_{0030} s_{0003})$ is regarded as a variable,
    $c(f; s_{2001} s_{0210} s_{0120} s_{0111} s_{0003})$ is determined.
    \medskip

    \noindent{}\emph{Step 2}
    ($9$ monomials):
    \begin{multline*}
      s_{0210}^{2} s_{0021} s_{0003},\
      s_{0210} s_{0120} s_{0102} s_{0012},\
      s_{0201} s_{0120}^{2} s_{0003},\
      s_{0210} s_{0201} s_{0021} s_{0012},\\
      s_{0201} s_{0120} s_{0102} s_{0021},\
      s_{0201}^{2} s_{0030} s_{0012},\
      s_{0210} s_{0102}^{2} s_{0030},\
      s_{0300} s_{0120} s_{0012}^{2},\
      s_{0300} s_{0102} s_{0021}^{2},\
    \end{multline*}
    $\epsilon = (0, 4, 2, 0), (0, 3, 3, 0), (0, 2, 4, 0), (0, 4, 1, 1), (0, 1, 4, 1), (0, 4, 0, 2), (0, 3, 1, 2), (0, 1, 3, 2), (0, 0, 4, 2)$.
    \medskip

    \noindent{}\emph{Step 3}
    ($5$ monomials):
    \begin{gather*}
      s_{0210}^{2} s_{0012}^{2},\
      s_{0120}^{2} s_{0102}^{2},\
      s_{0210} s_{0111} s_{0102} s_{0021},\
      s_{0201} s_{0120} s_{0111} s_{0012},\
      s_{0201}^{2} s_{0021}^{2},\
    \end{gather*}
    $\epsilon = (0, 4, 2, 0), (0, 2, 4, 0), (0, 3, 2, 1), (0, 2, 3, 1), (0, 4, 0, 2)$.
    \medskip

    \noindent{}\emph{Step 4}
    ($3$ monomials):
    \begin{gather*}
      s_{0210} s_{0111}^{2} s_{0012},\
      s_{0120} s_{0111}^{2} s_{0102},\
      s_{0201} s_{0111}^{2} s_{0021},\
    \end{gather*}
    $\epsilon = (0, 3, 2, 1), (0, 2, 3, 1), (0, 3, 1, 2)$.
    \medskip

    \noindent{}\emph{Step 5}
    ($1$ monomial):
    $s_{0111}^{4},\ $
    $\epsilon = (0, 2, 2, 2)$.
    \medskip

    (c-3) As a result, if one coefficient is regarded as a variable,
    then the polynomial $f$ is determined
    as we discussed in (a-2).
    It means that
    $\kappa_{(2,4,4,5)} \leq 1$.
    \medskip

  \item
    We show $\dim K_{(3,4,4,4)} \leq 3$
    by checking that,
    if the three coefficients
    \begin{gather}\label{eq:3444-3coef}
      c(f; s_{2001} s_{1200} s_{0210} s_{0030} s_{0003}),\;
      c(f; s_{2100} s_{1020} s_{0300} s_{0021} s_{0003}),\;
      c(f; s_{2010} s_{1002} s_{0300} s_{0102} s_{0030})
    \end{gather}
    are regarded as variables,
    then $f \in K_{(3,4,4,4)}$ is determined.
    \medskip

    (d-1)
    If we regard the three coefficients (\ref{eq:3444-3coef}) as variables,
    then 
    \begin{gather}\label{eq:3444-3coef-B}
      c(f; s_{3000} s_{0300} s_{0111} s_{0030} s_{0003}),\
      c(f; s_{2100} s_{1200} s_{0111} s_{0030} s_{0003}),\
      c(f; s_{2100} s_{1110} s_{0201} s_{0030} s_{0003}),
    \end{gather}
    are determined;
    in fact, we determine $c(f; g)$
    for the following $8$ monomials $g$ in $4$ steps:
    \medskip

    \noindent{}\emph{Step 1}:
    \begin{gather*}
      s_{2001} s_{1110} s_{0300} s_{0030} s_{0003},\
      s_{2100} s_{1011} s_{0300} s_{0030} s_{0003},\
      s_{2010} s_{1101} s_{0300} s_{0030} s_{0003},\
    \end{gather*}
    $\epsilon = (1, 4, 1, 0), (1, 0, 4, 1), (1, 1, 0, 4)$.
    \medskip

    For example,
    for $\epsilon = (1, 4, 1, 0)$ in Step 1 and
    $M(L_{(1, 4, 1, 0)}) = \set{s_{1200} s_{0210}, s_{1110} s_{0300}}$,
    we have
    \begin{gather*}
      c(f; s_{2001} s_{1200} s_{0210} s_{0030} s_{0003}) + c(f; s_{2001} s_{1110} s_{0300} s_{0030} s_{0003}) = 0.
    \end{gather*}
    Since $c(f; s_{2001} s_{1200} s_{0210} s_{0030} s_{0003})$ is regarded as a variable
    in (\ref{eq:3444-3coef}),
    $c(f; s_{2001} s_{1110} s_{0300} s_{0030} s_{0003})$ is determined
    \medskip




    \noindent{}\emph{Step 2}:
    $s_{2010} s_{1200} s_{0201} s_{0030} s_{0003}$,
    $s_{3000} s_{0300} s_{0111} s_{0030} s_{0003}$,
    $\epsilon = (1, 4, 0, 1), (3, 1, 1, 1)$.
    \medskip



    \noindent{}\emph{Step 3}:
    $s_{2100} s_{1200} s_{0111} s_{0030} s_{0003}$,
    $s_{3000} s_{0210} s_{0201} s_{0030} s_{0003}$,
    $\epsilon = (3, 3, 0, 0), (0,4,1,1)$.
    \medskip



    \noindent{}\emph{Step 4}:
    $s_{2100} s_{1110} s_{0201} s_{0030} s_{0003}$, $\epsilon = (3, 2, 1, 0)$.
    \medskip



    (d-2) For $\alpha  = (3,0,0,0)$,
    since $c(f; s_{3000} s_{0300} s_{0111} s_{0030} s_{0003})$ is determined
    in (\ref{eq:3444-3coef-B}),
    we may also determine $c(f; s_{3000} \cdot h)$
    for all $h \in M(L_{(0,4,4,4)})$,
    consisting of the $24$ monomials (and $s_{0300} s_{0111} s_{0030} s_{0003}$)
    in $5$~steps, which is the same as (c-2).
    \medskip

    (d-3) We consider the case of $\alpha = (2,0,0,1)$.
    Since the three coefficients
    \begin{gather*}
      c(f; s_{2001} s_{1011} s_{0300} s_{0030} s_{0003}),\
      c(f; s_{2001} s_{1200} s_{0120} s_{0021} s_{0003}),\
      c(f; s_{2001} s_{1110} s_{0120} s_{0111} s_{0003})
    \end{gather*}
    are determined
    in (\ref{eq:3444-3coef}) and (\ref{eq:3444-3coef-B}),
    we also determine $c(f; s_{2001} \cdot h)$
    for all $h \in M(L_{(1,4,4,3)})$,
    consisting of the following $49$ monomials (and the above $3$ polynomials)
    in 6~steps.
    \smallskip

    \noindent{}\emph{Step 1}
    ($9$ monomials):
    \begin{multline*}
      s_{1020} s_{0201} s_{0120} s_{0003},\
      s_{1011} s_{0300} s_{0030} s_{0003},\
      s_{1200} s_{0120} s_{0021} s_{0003},\
      s_{1020} s_{0201} s_{0102} s_{0021},\\
      s_{1200} s_{0111} s_{0021} s_{0012},\
      s_{1110} s_{0201} s_{0021} s_{0012},\
      s_{1110} s_{0102}^{2} s_{0030},\
      s_{1200} s_{0102} s_{0030} s_{0012},\
      s_{1020} s_{0300} s_{0012}^{2},\
    \end{multline*}
    $\epsilon = (1, 1, 4, 0), (1, 0, 4, 1), (0, 1, 4, 1), (0, 3, 0, 3), (0, 0, 3, 3), (0, 0, 3, 3), (0, 2, 0, 4), (0, 1, 1, 4), (0, 0, 2, 4)$.
    \smallskip

    \noindent{}\emph{Step 2}
    ($11$ monomials):
    \begin{multline*}
      s_{1110} s_{0210} s_{0021} s_{0003},\
      s_{1011} s_{0210} s_{0120} s_{0003},\
      s_{1020} s_{0120} s_{0102}^{2},\
      s_{1101} s_{0210} s_{0030} s_{0003},\\
      s_{1020} s_{0210} s_{0111} s_{0003},\
      s_{1011} s_{0201} s_{0102} s_{0030},\
      s_{1200} s_{0120} s_{0012}^{2},\
      s_{1200} s_{0102} s_{0021}^{2},\\
      s_{1020} s_{0201} s_{0111} s_{0012},\
      s_{1011} s_{0300} s_{0021} s_{0012},\
      s_{1002} s_{0300} s_{0030} s_{0012},\
    \end{multline*}
    $\epsilon = (1, 3, 2, 0), (0, 3, 3, 0), (1, 1, 4, 0), (1, 3, 1, 1), (0, 3, 2, 1), (1, 0, 4, 1), (0, 1, 3, 2), (0, 0, 4, 2), (0, 1, 2, 3)$, $(0, 0, 3, 3), (1, 0, 1, 4)$.
    \smallskip

    \noindent{}\emph{Step 3}
    ($11$ monomials):
    \begin{multline*}
      s_{1110} s_{0210} s_{0012}^{2},\
      s_{1110} s_{0120} s_{0111} s_{0003},\
      s_{1002} s_{0210} s_{0120} s_{0012},\
      s_{1101} s_{0120}^{2} s_{0003},\\
      s_{1101} s_{0210} s_{0021} s_{0012},\
      s_{1101} s_{0201} s_{0030} s_{0012},\
      s_{1020} s_{0210} s_{0102} s_{0012},\
      s_{1002} s_{0300} s_{0021}^{2},\\
      s_{1110} s_{0111} s_{0102} s_{0021},\
      s_{1020} s_{0111}^{2} s_{0102},\
      s_{1002} s_{0210} s_{0102} s_{0030},\
    \end{multline*}
    $\epsilon = (1, 3, 2, 0), (1, 2, 3, 0), (0, 3, 3, 0), (0, 2, 4, 0), (1, 3, 1, 1), (1, 3, 0, 2), (0, 3, 1, 2), (0, 0, 4, 2), (0, 2, 1, 3)$, $(0, 2, 1, 3), (1, 1, 0, 4)$.
    \smallskip

    \noindent{}\emph{Step 4}
    ($11$ monomials):
    \begin{multline*}
      s_{1110} s_{0120} s_{0102} s_{0012},\
      s_{1002} s_{0120}^{2} s_{0102},\
      s_{1011} s_{0201} s_{0120} s_{0012},\
      s_{1101} s_{0120} s_{0111} s_{0012},\\
      s_{1101} s_{0201} s_{0021}^{2},\
      s_{1101} s_{0111} s_{0102} s_{0030},\
      s_{1002} s_{0201} s_{0111} s_{0030},\
      s_{1011} s_{0210} s_{0111} s_{0012},\\
      s_{1011} s_{0210} s_{0102} s_{0021},\
      s_{1002} s_{0210} s_{0111} s_{0021},\
      s_{1110} s_{0111}^{2} s_{0012},\
    \end{multline*}
    $\epsilon = (1, 2, 3, 0), (0, 2, 4, 0), (1, 1, 3, 1), (0, 2, 3, 1), (1, 3, 0, 2), (1, 2, 1, 2), (0, 3, 1, 2), (1, 1, 2, 2), (1, 0, 3, 2)$, $(0, 1, 3, 2), (0, 1, 2, 3)$.
    \smallskip

    \noindent{}\emph{Step 5}
    ($6$ monomials):
    \begin{multline*}
      s_{1101} s_{0120} s_{0102} s_{0021},\
      s_{1002} s_{0201} s_{0120} s_{0021},\
      s_{1011} s_{0120} s_{0111} s_{0102},\\
      s_{1002} s_{0120} s_{0111}^{2},\
      s_{1011} s_{0201} s_{0111} s_{0021},\
      s_{1101} s_{0111}^{2} s_{0021},\
    \end{multline*}
    $\epsilon = (1, 2, 2, 1), (0, 3, 2, 1), (1, 1, 3, 1), (0, 2, 3, 1), (0, 3, 1, 2), (0, 1, 3, 2)$.
    \smallskip

    \noindent{}\emph{Step 6}
    ($1$ monomial):
    $s_{1011} s_{0111}^{3},\ $
    $\epsilon = (1, 1, 2, 2)$.
    \smallskip

    (d-4)
    In the same way, we can determine $c(f; s_{2010} \cdot h)$
    for all $h \in M(L_{(1,4,3,4)})$
    and
    $c(f; s_{2100} \cdot h)$
    for all $h \in M(L_{(1,3,4,4)})$.
    Therefore $f$ is determined if we regard three coefficients (\ref{eq:3444-3coef}) as variables.
    It implies $\kappa_{(3,4,4,4)} \leq 3$.
    \smallskip

  \item 
    Similarly, we may have $\kappa_{(3,3,4,5)} \leq 1$.
    We also have $\dim K^{(3)}_{(0,4,4,4)} \leq 1$ by the discusson (c-2). Hence
    $K^{(3)}_{(0,4,4,4)} = \CC\xi$.

  \end{inparaenum}
\bigskip


\end{document}